\theoremstyle{plain}
\newtheorem{theorem}{Theorem}
\newtheorem{lemma}{Lemma}
\theoremstyle{definition}
\newtheorem{remark}{Remark}
\DeclareMathOperator{\reg}{reg}
\DeclareMathOperator{\Res}{Res}
\newcommand{\enm}[1]{\ensuremath{#1}} 
\newcommand{\PP}{\enm{\mathbb {P}}}
\newcommand{\NN}{\enm{\mathbb {N}}}
\newcommand{\Oo}{\enm{\mathcal {O}}}
\newcommand{\Ii}{\enm{\mathcal {I}}}
\newcommand{\Bb}{\enm{\mathcal {B}}}
\newcommand{\Ll}{\enm{\mathcal{L}}}
\newcommand{\Ww}{\enm{\mathcal {W}}}
\newcommand{\Dd}{\enm{\mathcal {D}}}
\newcommand{\Jj}{\enm{\mathcal {J}}}
\begin{document}

\title[Rational curves of degree $12$ on a general quintic threefold]
{Finiteness of rational curves of degree $12$ \\ on a general quintic threefold}
\author{Edoardo Ballico and Claudio Fontanari}
\address{Dipartimento di Matematica\\
 Universit\`a di Trento\\
38123 Povo (TN), Italy}
\email{edoardo.ballico@unitn.it, claudio.fontanari@unitn.it}
\thanks{}
\subjclass{} 
\keywords{}

\begin{abstract}
We prove the following statement, predicted by Clemens' conjecture:
\emph{A generic quintic threefold contains only finitely many smooth rational curves of degree $12$.}

\end{abstract}

\maketitle

\section{Introduction}

The present paper is entirely devoted to the proof of the following instance of Clemens' conjecture (\cite{c}):

\begin{theorem}\label{clemens}
A generic quintic threefold contains only finitely many smooth rational curves of degree $12$. 
\end{theorem}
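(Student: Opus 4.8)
The plan is to exhibit the smooth rational curves of degree $12$ as the fibres of a morphism from an incidence variety to $\PP^{125}$ and to bound its dimension. Write $H$ for the locally closed subscheme of the Hilbert scheme of $\PP^4$ parametrizing smooth rational curves of degree $12$, set $\PP^{125} = |\Oo_{\PP^4}(5)|$, and let
\[
\Sigma := \{(C,X)\ :\ C \in H,\ C \subset X\}\ \subset\ H \times \PP^{125},
\]
with projections $p\colon \Sigma \to H$ and $q\colon \Sigma \to \PP^{125}$. The fibre $q^{-1}(X)$ is exactly the scheme of smooth rational curves of degree $12$ on the quintic threefold $X$, so, since $\Sigma$ has only finitely many irreducible components, Theorem~\ref{clemens} will follow once we prove $\dim \Sigma \le 125$: a general quintic then misses the image of every non-dominant component, while on every component dominating $\PP^{125}$ the general fibre of $q$ is finite.

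To estimate $\dim\Sigma$ note that $p^{-1}(C) = \PP(H^0(\Ii_C(5)))$, and that the sequence $0\to\Ii_C(5)\to\Oo_{\PP^4}(5)\to\Oo_C(5)\to 0$ gives, for a smooth rational curve of degree $12$,
\[
h^0(\Ii_C(5)) = 126 - (5\cdot 12+1) + h^1(\Ii_C(5)) = 65 + h^1(\PP^4,\Ii_C(5)).
\]
Stratifying $H$ into finitely many locally closed pieces $W$ on which $h^1(\Ii_C(5))$ is constant, one has $\dim p^{-1}(W) = \dim W + 64 + h^1(\Ii_C(5))$, so the desired bound $\dim\Sigma\le 125$ is equivalent to the inequality
\[
\dim W + h^1(\PP^4,\Ii_C(5))\ \le\ 5\cdot 12+1\ =\ 61
\]
for every stratum $W$ and $C\in W$ general. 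This is the statement to verify stratum by stratum.

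On the main stratum, where $C$ is a general rational curve of degree $12$, the normal bundle $N_{C/\PP^4}$ is balanced so that $\dim W = 61$, and a general rational curve has maximal rank, whence $H^0(\Oo_{\PP^4}(5))\to H^0(\Oo_C(5))$ is surjective and $h^1(\Ii_C(5)) = 0$; the inequality then holds with equality. (In this case one also checks, through the exact sequence $0\to N_{C/X}\to N_{C/\PP^4}\to\Oo_C(5)\to 0$, that $N_{C/X}\cong\Oo_C(-1)^{\oplus 2}$ for a general quintic $X\supset C$, so that $q$ is dominant with finite general fibre, exactly as in the treatment of lower degrees.) The heart of the argument is the remaining strata, namely the smooth rational curves of degree $12$ that fail to impose independent conditions on quintics, together with any stratum of excess dimension, where the deficiency $h^1(\Ii_C(5))$ must then be large enough to compensate. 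We will organize these geometrically: curves contained in a hyperplane $\PP^3$ (and therein on a quadric, a cubic, $\ldots$); curves lying on a surface of small degree in $\PP^4$, such as a rational normal scroll, a Veronese or a del Pezzo surface; and curves possessing a multisecant linear space of large order, for instance a $k$-secant line or a plane curve of high degree in a hyperplane section. For each type we will estimate $\dim W$ and $h^1(\Ii_C(5))$ — the latter via the exact sequences relating $\Ii_C$ to the ideal sheaf of the ambient hyperplane or surface, via Gruson--Lazarsfeld--Peskine-type regularity bounds and their refinements for near-extremal curves, and, when these are not sharp enough, by peeling off a multisecant line or a degenerate component and reducing to a rational curve of degree $<12$, so as to invoke the already established cases of Clemens' conjecture.

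The main obstacle is precisely the borderline strata for which $\dim W + h^1(\Ii_C(5))$ equals, or barely falls below, $61$: there the extra moduli of a special family of deficient curves are matched exactly by the extra quintics through such a curve, so the dimension count carries no slack and each such family must be pinned down and estimated optimally. Concretely, rational curves of degree $12$ lying on cubic or quartic scrolls in $\PP^4$, and curves carrying a multisecant line of near-maximal order, are the delicate cases, and controlling them is the point at which degree $12$ goes beyond a routine extension of the lower-degree results.
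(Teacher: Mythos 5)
Your reduction is exactly the paper's own set-up: the incidence variety, the identity $h^0(\Ii_C(5))-1=125-(5d+1)+h^1(\Ii_C(5))$, and the resulting target inequality $\dim W+h^1(\Ii_C(5))\le 61$ for each stratum are precisely (\ref{one}) and (\ref{two}), and the main stratum is disposed of by maximal rank (\cite{be}) just as in the introduction. But from that point on your text is a plan, not a proof: every clause of the form ``we will estimate $\dim W$ and $h^1(\Ii_C(5))$\dots'' and ``each such family must be pinned down and estimated optimally'' is exactly the content that still has to be supplied, and it is the entire technical body of the paper. Nothing in the proposal actually produces a single inequality of the form $h^1(\Ii_C(5))\le 61-\dim W$ on a nontrivial stratum, and you explicitly concede that the borderline strata (scrolls, near-maximal multisecants) are not handled. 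As written, the argument does not close.

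Concretely, the missing mechanism in the paper is the following chain, which your outline does not contain. First, a liaison argument (Lemma \ref{0a1}) shows that a non-degenerate $C\in M_{12}$ with $h^0(\Ii_C(2))\ge 3$ lies on no smooth quintic at all, so it suffices to force $h^1(\Ii_C(2))\ge 13$ rather than to bound each stratum directly. Second, the deficiency is propagated \emph{downward} in degree: a bilinear-map argument (Lemma \ref{nn1}) gives $h^1(\Ii_C(t-1))\ge 4+h^1(\Ii_C(t))$ provided the general hyperplane section has $h^1=0$ in degree $t$, and the failure of that hypothesis is completely classified (Lemmas \ref{c1.1}, \ref{c2.1}, via \cite{ep}) as the presence of a $(t+2)$-secant line or a $(2t+2)$-secant conic; those exceptional configurations are then killed by separate dimension counts (Lemmas \ref{0a8}, \ref{0a9}, \ref{0a11}, \ref{0a12}), and the resulting chain (\ref{chain}) together with Lemma \ref{bd2} and Lemma \ref{fff1} yields the contradiction $h^0(\Ii_C(2))\ge 3$ (or a direct dimension bound when $h^1(\Ii_C(5))\le 5$). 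Third, the degenerate case is not a routine stratum: it sits in codimension $10$, forcing $h^1(\Ii_C(5))\ge 11$, and is excluded by a separate liaison analysis of the residual curve $C_T$ inside complete intersections of type $(5,5)$ in the hyperplane section $W'$ (Section \ref{deg}), using the finiteness of lines, conics and low-degree curves on $W'$. Your GLP-regularity and ``peel off a multisecant'' heuristics point in the right direction, but without the downward propagation of $h^1$ and the classification of its obstructions, the delicate strata you yourself flag as the main obstacle remain unproved, so the proposal has a genuine gap rather than an alternative route.
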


We point out that the cases $d \le 11$ have been previously addressed in \cite{k} ($d \le 7$), \cite{n} and \cite{jk} 
($d=8,9$), \cite{c1} ($d=10$), \cite{c2} and \cite{da} ($d=11$), and we recall the general set-up.  

Let $M_d$ be the set of smooth rational curves of degree $d$ in $\PP^4$. It is smooth and irreducible of dimension $5d+1$. Let $\PP^{125}$ denote the projective space of all quintic hypersurfaces of $\PP^4$ and consider the incidence correspondence $I_d = \{ (C,W): C \subset W \} \subset M_d\times \PP^{125}$. Let $\pi _1: I_d\to M_d$ and $\pi _2: I_d\to \PP^{125}$ denote the restrictions to $I_d$ of the two projections. 

The map $\pi _2$ turns out to be finite if for every irreducible family $\Gamma \subseteq M_d$ with general element $C$ 
we have: 
\begin{equation}\label{one}
\dim \Gamma + (h^0(\Ii _C(5))-1) \le 125.
\end{equation}
From the standard exact sequence 
$$
0 \to H^0(\Ii _C(5)) \to H^0(\Oo _{\PP^4}(5)) \to H^0(\Oo _C(5)) \to  H^1(\Ii _C(5)) \to 0
$$
it follows that 
\begin{equation}\label{two}
h^0(\Ii _C(5))-1 = 125 - (5d+1) + h^1(\Ii _C(5))
\end{equation}
and by \cite{be} the general curve $C$ in $M_d$ satisfies $h^1(\Ii _C(5)) =0$, so in order to prove 
Clemens' conjecture one needs to control curves $C$ with $h^1(\Ii _C(5)) > 0$.

In the case $d=12$, we show that if an irreducible family $\Gamma \subseteq M_{12}$ of non-degenerate 
curves is a potential exception to Clemens' conjecture, then its general element $C$ satisfies 
$h^1(\Ii _C(2)) \ge 13$. It follows that $h^0(\Ii _C(2)) \ge 3$ and this provides a contradiction 
(see Lemma \ref{0a1}). 

The key point in our 
reduction is to obtain $h^1(\Ii _C(2)) \ge 13$ from $h^1(\Ii _C(5)) > 0$. Indeed, Lemma \ref{nn1} implies 
that $h^1(\Ii _C(t-1)) \ge 4 + h^1(\Ii _C(t))$ except in two special cases, which are identified 
by Lemma \ref{c1.1} and then excluded in Lemmas \ref{0a8}, \ref{0a9}, \ref{0a11}, \ref{0a12}. 
Finally, a careful analysis of the degenerate case is provided (see Section \ref{deg}).

We remark that the strong form of Clemens' conjecture (as proved by Cotterill in \cite{c1} and \cite{c2} 
for $d=10, 11$, characterizing also singular irreducible rational curves on the general quintic threefold) 
cannot be achieved by our methods.    

We work over the complex field $\mathbb{C}$.

We thank Gilberto Bini, Gianluca Occhetta  and Luis Sol\'a Conde for helpful conversations.
We are also grateful to the anonymous referee and to Carlo Madonna for useful comments.

This research was partially supported by FIRB 2012 "Moduli spaces and Applications", by GNSAGA of INdAM and 
by MIUR (Italy).

\section{Non-degenerate case}\label{non}

\begin{lemma}\label{0a1}
If $C\in M_{12}$, $C$ is non-degenerate and $h^0(\Ii _C(2)) \ge 3$, then there is no smooth quintic $3$-fold containing $C$.
\end{lemma}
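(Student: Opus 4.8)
The plan is to suppose that $C \subset W$ for some smooth quintic threefold $W$ and to derive a contradiction by confining $C$ to a surface of very small degree. Since $h^0(\Ii_C(2)) \ge 3$, fix three linearly independent quadrics $Q_1, Q_2, Q_3$ through $C$. Because $C$ is irreducible and non-degenerate, no two of the $Q_i$ can share a common (hyperplane) component, so $Q_1 \cap Q_2$ is pure of dimension $2$ and degree $4$; and since $\deg C = 12$ exceeds $\deg(Q_1 \cap Q_2 \cap Q_3) = 8$ when that scheme is a curve, the scheme $Q_1 \cap Q_2 \cap Q_3$ must contain a $2$-dimensional component $S$ with $C \subseteq S$. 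This $S$ is then a component of $Q_1 \cap Q_2$ lying on $Q_3$, hence lies on every quadric in $\langle Q_1, Q_2, Q_3 \rangle$; so $h^0(\Ii_S(2)) \ge 3$, and $S$ is a reduced, irreducible, non-degenerate surface with $3 \le \deg S \le 4$. (In fact any quadric through $C$ must contain $S$, since otherwise it would meet $S$ in a curve of degree $\le 8 < 12$.)

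The crux is that $S$ cannot lie on $W$: by the Grothendieck--Lefschetz theorem $\operatorname{Pic}(W) = \ZZ \cdot \Oo_W(1)$, so any surface contained in $W$ is a divisor of class $m\,\Oo_W(1)$, of degree $5m$, which is incompatible with $\deg S \in \{3,4\}$. Hence $W$ cuts out on $S$ an effective divisor $D_0 \in |\Oo_S(5)|$ of degree $5\deg S \le 20$ containing $C$, and the residual divisor $D := D_0 - C$ (formed on a minimal resolution of $S$ when $S$ is singular) is effective of degree $5\deg S - 12 \le 8$. The final contradiction will be that the class $\Oo_S(5) - [C]$ is not effective, and this is checked by running through the short list of possible $S$.

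When $\deg S = 3$, $S$ is a variety of minimal degree, i.e.\ the smooth cubic scroll $\mathbb F_1$ or the cone over a twisted cubic. On $\mathbb F_1$ (with $e$ the $(-1)$-section, $f$ the ruling, $e^2 = -1$, $e\cdot f = 1$, $f^2=0$, hyperplane class $e + 2f$), writing $[C] = ae + bf$, the conditions $\deg C = a + b = 12$ and $p_a(C) = 0$ force $a = 1$, so $[C] = e + 11f$ and $[D] = 5(e+2f) - (e+11f) = 4e - f$; but $(4e - f)\cdot e = -5 < 0$, so $e$ would occur in $D$ with multiplicity $\ge 5$, making $-e - f$ effective --- absurd. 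The cone over a twisted cubic is instead ruled out because it carries no smooth rational curve of degree $12$ at all: on its resolution $\mathbb F_3$ such a curve would have strict transform meeting the $(-3)$-curve in $9$ points, forcing $C$ to be singular at the vertex, while missing the vertex would give genus $15$.

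It remains to treat $\deg S = 4$. Here $S$ is not a complete intersection of two quadrics (otherwise $h^0(\Ii_S(2)) = 2$), so it lies in the short list of non-projectively-normal quartic surfaces in $\PP^4$ (projections of the Veronese surface or of the rational scrolls $S(1,3)$, $S(2,2)$, and cones over quartic curves in $\PP^3$); for each case one shows either that no smooth rational curve of degree $12$ lies on $S$ --- e.g.\ on a projected Veronese its strict transform in $\PP^2$ would be an integral plane sextic, hence of genus $10 \neq 0$ --- or that the residual computation above again produces a non-effective divisor class. I expect this degree-$4$ analysis to be the real obstacle: both the precise classification of quartic surfaces carrying a net of quadrics and the case-by-case (non-)effectivity check are delicate, whereas the degree-$3$ case becomes routine once the Lefschetz restriction $\deg S \equiv 0 \pmod 5$ is in hand.
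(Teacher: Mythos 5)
Your setup and your degree-$3$ analysis essentially reproduce the paper's proof: three quadrics plus Bezout produce an irreducible non-degenerate surface $S$ with $3\le \deg S\le 4$ containing $C$, Grothendieck--Lefschetz keeps $S$ off $W$, and on the cubic scroll (resp.\ the cone over a twisted cubic) adjunction forces $[C]=e+11f$ and a non-effective residual (resp.\ shows no smooth rational curve of degree $12$ exists at all). Those computations are correct and match the paper's.

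The genuine gap is the degree-$4$ case, which you leave unexecuted and misdiagnose as ``the real obstacle.'' It is not an obstacle at all, and the classification of non-projectively-normal quartic surfaces you propose is the wrong road: by your own construction $S$ is an irreducible \emph{component of $Q_1\cap Q_2$}, which is a purely $2$-dimensional Cohen--Macaulay scheme of degree exactly $4$. Hence if $\deg S=4$ then $S=Q_1\cap Q_2$ is itself the complete intersection of two quadrics, so $h^0(\Ii_S(2))=2$; combined with your own parenthetical observation that every quadric through $C$ must contain $S$ (Bezout, since $12>8$), this gives $h^0(\Ii_C(2))\le 2$, contradicting the hypothesis. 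This is precisely how the paper disposes of the degree-$4$ case in one line. A projected Veronese surface or a non-CI quartic scroll simply cannot arise here, because such a surface can never be a proper component of the intersection of two quadric hypersurfaces. You had both ingredients of this argument written down; you only failed to combine them, and instead announced a case analysis that, as written, leaves the proof incomplete.
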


\begin{proof}
Assume by contradiction $h^0(\Ii _C(2)) \ge 3$ and the existence of a smooth quintic $3$-fold $W\subset \PP^4$ with $W\supset C$ and let $E\subset \PP^4$ be the intersection of $3$ general element of 
$|\Ii _C(2)|$. Since $\deg ({C})=12 >8$, Bezout theorem gives
the existence of an integral surface $F$ such that $C\subset F\subseteq E$. 
Since $C$ is non-degenerate, $F$ is non-degenerate and
so $\deg (F)\ge 3$. Assume $E = F$, i.e. $\deg (F)=4$. Since the complete intersection of $2$ quadric hypersurfaces is contained in exactly two linearly independent quadrics
and $\deg ({C}) > 8$, we get $h^0(\Ii _C(2)) =2$, a contradiction.
Thus $\deg (F) =3$. The classification of minimal degree non-degenerate surfaces in $\PP^4$ gives $h^0(\Ii _F(2)) =3$. By assumption there is $W$ with $C\subset W$.
Since $\mathrm{Pic}(W)$ is freely generated by $\Oo _W(1)$, $F\nsubseteq W$. Hence $W\cap F$ links $C$ to a degree $3$ locally Cohen-Macaulay curve $T\subset W\cap F$.
By the classification of minimal degree surfaces in $\PP^4$, either $F$ is a cone with vertex $o$ over a rational normal curve $D\subset \PP^3$ or
$F$ is isomorphic to the the Hirzebruch surface $F_1$ embedded by the complete linear system $|h+2f|$, where $h$ is a section of the ruling of $F_1$ and $f$ is a fiber of the ruling of $F_1$. 

First assume that $F$ is a cone. Since $C$ is smooth, it has multiplicity at most $1$ at $o$. Hence $o\notin C$ and the linear projection from $o$ induces a degree $4$ map $\ell: C\to D$.
Let $\pi : G\to F$ be the blowing up of $o$ and $C'$ the strict transform of $C$ in $G$. $G$ is isomorphic to the Hirzebruch surface $F_3$ and the map $\pi$ is induced by the complete linear  system $|h+3f|$. Since $o\notin C$ and $\deg (\ell )=4$, $\pi$ induces an isomorphism $C'\to C$  and $C'\in |4h+12f|$. We have
$\omega _G\cong \Oo _G(-2h-5f)$. The adjunction formula gives $\omega _{C'} \cong \Oo _G(2h+7f)$. Hence $h^0(\omega _{C'})>0$, contradicting the rationality and smoothness
of $C'$.

Now assume $F\cong F_1$. Take $a, b\in \NN$ such that $C\in |ah+bf|$. Since $C$ is irreducible and not a line, we have $b\ge a>0$. Since $\Oo _C(1) \cong \Oo _C(h+2f)$, $h^2=-1$, $h\cdot f=1$, $f^2=0$ and $\deg ({C}) =d$,
we have $12 = a+b$. Since $\omega _{F_1}\cong \Oo _{F_1}(-2h-3f)$, the adjunction formula gives $\omega _C \cong \Oo _C((a-2)h+(b-3)f)$. Since
$\deg (\omega _C)=-2$, we get $(ah+bf)\cdot ((a-2)h+(b-3)f) =-2$, i.e. $-a(a-2) +a(b-3) +b(a-2) =-2$, i.e $(b-a)(a-2) +a(b-3) =-2$. Since $b\ge a>0$ and $b=12-a$, we get $a=1$
and $b=11$. Since $\Oo _{F_1}(5) \cong \Oo _{F_1}(5h+10f)$ and $b=11$, we have $h^0(F_1,\Ii _{C,F_1}(5)) =0$. Hence $W\supset F_1$, a contradiction.
\end{proof}

The following fact is one of key ingredients in the proof of \cite[Theorem 4.1]{da}.

\begin{lemma}\label{nn1}
Fix integer $t\ge 2$, $r\ge 3$ and an integral and non-degenerate curve $T\subset \PP^r$ such that $h^1(\Ii _T(t)) >0$. Assume that $h^1(M,\Ii _{M\cap T,M}(t)) =0$
for every hyperplane $M\subset \PP^r$. Then $h^1(\Ii _T(t-1)) \ge r+h^1(\Ii _T(t))$.
\end{lemma}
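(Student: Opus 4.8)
The plan is to exploit a hyperplane section argument relating cohomology in degree $t$ to cohomology in degree $t-1$, together with the hypothesis that the hyperplane sections $M\cap T$ impose independent conditions on forms of degree $t$. Concretely, for a general hyperplane $M\subset\PP^r$ with $H:=M\cap T$ a set of $\deg(T)$ points (general hyperplane section of the integral nondegenerate curve $T$), I would consider the exact sequence
\begin{equation*}
0\to \Ii_T(t-1)\to \Ii_T(t)\to \Ii_{H,M}(t)\to 0
\end{equation*}
obtained by restricting $\Ii_T(t)$ to $M$ and using that the kernel of multiplication by the equation of $M$ on $\Ii_T(t-1)$ vanishes (here one uses that $T$ is integral, so the equation of $M$ is a non-zero-divisor on the homogeneous coordinate ring, hence on $\Ii_T$). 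Passing to the long exact cohomology sequence and using the hypothesis $h^1(M,\Ii_{H,M}(t))=0$, I get a surjection $H^1(\Ii_T(t-1))\twoheadrightarrow H^1(\Ii_T(t))$; this already gives the inequality without the ``$+r$''.

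To gain the extra $r$, the idea is that the cokernel of $H^0(\Ii_T(t-1))\to H^0(\Ii_T(t))$ — which injects into $H^0(M,\Ii_{H,M}(t))$ and thus measures how many ``new'' degree-$t$ forms through $T$ appear that are not multiples of the hyperplane equation — is large when $h^1$ is being killed at level $t$. The cleanest route is a dimension count: from the four-term sequence
\begin{equation*}
0\to H^0(\Ii_T(t-1))\to H^0(\Ii_T(t))\to H^0(M,\Ii_{H,M}(t))\to H^1(\Ii_T(t-1))\to H^1(\Ii_T(t))\to 0,
\end{equation*}
one reads
\begin{equation*}
h^1(\Ii_T(t-1))-h^1(\Ii_T(t)) = h^0(M,\Ii_{H,M}(t)) - \big(h^0(\Ii_T(t))-h^0(\Ii_T(t-1))\big).
\end{equation*}
Now $H$ is a nondegenerate set of points in $M\cong\PP^{r-1}$ (since $T$ is nondegenerate and $M$ general, its hyperplane section spans $M$, being in linearly general position by the general position lemma), so $H$ does not lie on any hyperplane of $M$; together with $h^1(M,\Ii_{H,M}(t))=0$ this forces $h^0(M,\Ii_{H,M}(t))=\binom{r-1+t}{t}-\deg(T)$, and comparing with $h^0(\Oo_{\PP^r}(t))-h^0(\Oo_{\PP^r}(t-1))=\binom{r+t-1}{t}$ one sees the gap $h^0(\Ii_T(t))-h^0(\Ii_T(t-1))$ is at most $h^0(M,\Ii_{H,M}(t))$, with the deficiency controlled by how far $\Oo_{\PP^r}(t)/\Oo_{\PP^r}(t-1)\to\Oo_M(t)$ fails to be surjective on sections of the ideal sheaf. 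The quantitative statement I want is that this gap is at most $h^0(M,\Ii_{H,M}(t))-r$, i.e. that at least $r$ of the degree-$t$ forms on $H\subset M$ do not extend to (forms through $T$ modulo the hyperplane); equivalently, $H^1(\Ii_T(t-1))\to H^1(\Ii_T(t))$ has kernel of dimension at least $r$.

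The honest way to secure the ``$+r$'' is to note that $h^1(\Ii_T(t))>0$ means the restriction map $H^0(\Oo_{\PP^r}(t))\to H^0(\Oo_T(t))$ is not surjective, and then to run the same hyperplane-section analysis one degree lower: since $h^1(M,\Ii_{H,M}(t))=0$ but the curve $T$ itself has $h^1(\Ii_T(t))>0$, the failure must come from the ``extension'' step, and a standard argument (as in the proof of \cite[Theorem 4.1]{da}, whose key ingredient this lemma is stated to be) shows that the cokernel of $H^0(\Ii_T(t))\to H^0(M,\Ii_{H,M}(t))$ has dimension at least $r$, because $H$ spans a $\PP^{r-1}$ and one can separately control the $r$ coordinate directions, or invoke that a subcanonical-type bound on the Hilbert function of $T$ forces the difference of consecutive ideal dimensions to jump by at least $r$ once the Castelnuovo bound is not attained. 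I expect the main obstacle to be precisely pinning down this last quantitative step cleanly: the inequality ``$+r$'' rather than merely ``$+1$'' is where the nondegeneracy of $T$ (hence of $H$ in $\PP^{r-1}$) and the vanishing $h^1(M,\Ii_{H,M}(t))=0$ must be combined, and one has to be careful that $M$ is chosen general enough that $H$ is in uniform position and that the relevant restriction maps behave generically. Once that is in hand, the stated inequality $h^1(\Ii_T(t-1))\ge r+h^1(\Ii_T(t))$ follows immediately from the displayed identity above.
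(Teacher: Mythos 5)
There is a genuine gap. Your argument securely establishes only the weak inequality: from the residual exact sequence of a single hyperplane $M$ with $h^1(M,\Ii_{M\cap T,M}(t))=0$ you get a surjection $H^1(\Ii_T(t-1))\twoheadrightarrow H^1(\Ii_T(t))$, hence $h^1(\Ii_T(t-1))\ge h^1(\Ii_T(t))$. The entire content of the lemma is the extra $+r$, and at that point your proposal stops proving and starts hoping: the ``quantitative statement I want'' (that the cokernel of $H^0(\Ii_T(t))\to H^0(M,\Ii_{H,M}(t))$ has dimension at least $r$) is, via your own four-term sequence, literally a restatement of the conclusion, so asserting it is circular; the appeal to ``the proof of [da, Theorem 4.1], whose key ingredient this lemma is stated to be'' is circular in the other direction; and the gestures toward Castelnuovo-type bounds and ``separately controlling the $r$ coordinate directions'' are not carried out. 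Moreover, no single general hyperplane can give the $+r$: the identity you display shows that one hyperplane only measures a single cokernel, and there is no reason for that one cokernel to have dimension $\ge r$.

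The missing idea is to use the hypothesis in its full strength --- it holds for \emph{every} hyperplane $M$, not just a general one --- and to let all hyperplanes act simultaneously. For each nonzero linear form $\ell$ with $M=\{\ell=0\}$, the surjection above dualizes to an injection $e_M\colon H^1(\Ii_T(t))^\vee\hookrightarrow H^1(\Ii_T(t-1))^\vee$. Assembling these over all $\ell$ gives a bilinear map $u\colon H^1(\Ii_T(t))^\vee\times H^0(\Oo_{\PP^r}(1))\to H^1(\Ii_T(t-1))^\vee$ with $u(a,\ell)=e_M(a)\ne 0$ whenever $a\ne 0$ and $\ell\ne 0$. The bilinear lemma (a Hopf-type statement over $\mathbb{C}$: a bilinear map with no zeros off the origin in each factor has image spanning a subspace of dimension at least $\dim V+\dim W-1$) then yields $h^1(\Ii_T(t-1))\ge h^1(\Ii_T(t))+h^0(\Oo_{\PP^r}(1))-1=h^1(\Ii_T(t))+r$. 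Without this (or an equivalent mechanism that genuinely varies the hyperplane), the $+r$ does not follow.
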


\begin{proof}
For any hyperplane $M\subset \PP^r$ we have an exact sequence
\begin{equation}\label{eqnn1}
0 \to \Ii _T(t-1) \to \Ii _T(t) \to \Ii _{T\cap M,M}(t)\to 0
\end{equation}
Since $h^1(M,\Ii _{T,M}(t)) =0$, the map $H^1(\Ii _T(t-1)) \to H^1(\Ii _T(t))$ is surjective, hence its dual
$e_M: H^1(\Ii _T(t))^\vee \to H^1(\Ii _T(t-1))^\vee$ is injective. Taking
the equations of all hyperplanes we get a bilinear map map $u: H^1(\Ii _T(t))^\vee \times H^0(\Oo _{\PP^4}(1)) \to H^1(\Ii _T(t-1))^\vee$, which is injective
with respect to the second variables, i.e. for every non-zero linear form $\ell $ $u_{|H^1(\Ii _T(t))^\vee \times \{\ell \}}$ is injective (it is $e_M$ with $M:= \{\ell =0\}$).
Hence if $(a,\ell )\in H^1(\Ii _T(t))^\vee \times H^0(\Oo _{\PP^4}(1))$ with $a\ne 0$ and $\ell \ne 0$, then $u(a, \ell ) = e_M(a) \ne 0$. Therefore the bilinear map $u$ is non-degenerate in each variable.
Hence $h^1(\Ii _T(t-1)) \ge h^1(\Ii _T(t)) +h^0(\Oo _{\PP^r}(1)) -1$ by the bilinear lemma.
\end{proof}

The next Lemma \ref{c1.1} is perhaps the technical heart of this work. 
It relies on a particular case of a very strong result on $0$-dimensional schemes in the plane, namely, 
\cite[Corollaire 2]{ep} (see also \cite[Remarque (i)]{ep}). We recall the statement in \cite{ep} for reader's convenience.  
Let $E \subset \mathbb{P}^2$ be a zero-dimensional scheme of degree $d$. Let $\tau := \max \{n: h^1(\mathcal{I}_E(n) > 0 \}$. Let $s$ be an integer such that $s \le d/s$ and $\tau \ge s-3+d/s$. 
Then either $E$ is the complete intersection of a curve of degree $s$ and a curve of degree $d/s$ and $\tau=s-3+d/s$, or there exists $s'$ with $0 < s' < s$ and a subscheme $E' \subset E$ contained 
in a curve of degree $s'$ such that $s'(\tau +(5-s')/2) \ge \deg(E') \ge s'(\tau-s'+3)$. In particular, if $\tau > d/3$, then either we have $\tau+2$ points on a line (counted with multiplicity), or we have 
$2\tau+2$ or $2\tau+3$ points on a conic (counted with multiplicity). 

For the proof of Lemma \ref{c1.1} we also need to introduce the notion of residual scheme. Let $M$ be a projective scheme, $A$ a closed subscheme and $D\subset M$ an effective Cartier divisor of $M$. 
The residual scheme $\mathrm{Res}_D(A)$ of $A$ with respect to $D$ is the closed subscheme of $M$ with $\Ii _A:\Ii _D$ as its ideal sheaf. We always have $\mathrm{Res}_D(A) \subseteq A$. If $A$ 
is a reduced scheme, then $\mathrm{Res}_D(A)$ is the union of the irreducible components of $A$ not contained in $D$. If $A$ is a zero-dimensional scheme, then $\deg (A) =\deg (A\cap D)+\deg (\mathrm{Res}_D(A))$. 
For any line bundle $\Ll$ on $M$ we have an exact sequence
$$
0 \to \Ii _{\mathrm{Res}_D(A)}\otimes \Ll (-D)\to \Ii _A\otimes \Ll \to \Ii _{A\cap D,D}\otimes (\Ll _{|D})\to 0.
$$

\begin{lemma}\label{c1.1}
Fix an integer $t \ge 2$. Set $M:= \PP^3$ and let $Z\subset M$ a  zero-dimensional scheme spanning $M$ and with $\deg (Z) \le 3t$. We have
$h^1(M,\Ii _{Z,M}(t)) \ne 0$ if and only if either there is a line $R\subset M$ with $\deg (R\cap Z)\ge t+2$ or there is a conic $D\subset M$
such that $\deg (D\cap Z)\ge 2t+2$ or there is a line $L\subset M$ such that $\deg (Z\cap L) =t+1$ and the union of the connected components of $Z$ whose reduction
is contained in $L$ has degree $\ge 2t+2$.
\end{lemma}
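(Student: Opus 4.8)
The plan is to reduce the three-dimensional statement to the planar result \cite[Corollaire 2]{ep} by a hyperplane-section/residuation argument, so that $h^1(M,\Ii_{Z,M}(t))\neq 0$ forces the existence of a plane $H\subset M$ cutting out a large subscheme $Z\cap H$, and then to unwind what "large subscheme in the plane" means using the classification quoted from \cite{ep}. The "if" direction is essentially Bezout: if $\deg(R\cap Z)\ge t+2$ for a line $R$, then restricting to $R$ gives $h^1(R,\Ii_{R\cap Z,R}(t))>0$, and the exact sequence of the pair $(\Ii_{\Res_R(Z)}(t-1),\Ii_Z(t))$ propagates this upward; similarly for the conic $D$ using $h^1(D,\Ii_{D\cap Z,D}(t))>0$ when $\deg(D\cap Z)\ge 2t+2$; the third (non-reduced) case is handled by the same residuation along $L$, where the connected components with support on $L$ contribute both to $Z\cap L$ and, after one step of residuation, again to a scheme supported on $L$.

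For the "only if" direction I would argue as follows. Suppose $h^1(M,\Ii_{Z,M}(t))\neq 0$ and no line contains $t+2$ points of $Z$. Choose a general plane $H$; then $\deg(Z\cap H)$ is controlled and, writing $Z'=\Res_H(Z)$, the residual exact sequence
$$
0\to \Ii_{Z'}(t-1)\to \Ii_Z(t)\to \Ii_{Z\cap H,H}(t)\to 0
$$
forces either $h^1(H,\Ii_{Z\cap H,H}(t))\neq 0$ or $h^1(M,\Ii_{Z'}(t-1))\neq 0$; iterating and keeping track of degrees (using $\deg(Z)\le 3t$) one lands, after finitely many steps, on a plane section with $h^1$ nonzero. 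At that stage the degree bound reads $\deg(Z\cap H)\le 3t$ while $\tau\ge t$, so $\tau>\deg(Z\cap H)/3$ would be borderline; I would instead apply the precise form of \cite[Corollaire 2]{ep} with $s=2$ (or, in the borderline complete-intersection case, with the complete-intersection alternative) to conclude that $Z\cap H$ contains either $\tau+2\ge t+2$ collinear points — giving a line $R$ with $\deg(R\cap Z)\ge t+2$ directly — or $2\tau+2$, resp. $2\tau+3$, points on a conic in $H$, which is a conic $D\subset M$ with $\deg(D\cap Z)\ge 2t+2$. The third case of the Lemma arises precisely when the line $L=R_{\mathrm{red}}$ meets $Z$ in exactly $t+1$ points but the fat part of $Z$ along $L$ is heavy enough to still produce cohomology after residuation; this has to be separated out because then neither the "line with $t+2$ points" nor the "conic with $2t+2$ points" conclusion need hold, yet the residual scheme $\Res_L(Z)$ still carries the $h^1$.

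The main obstacle I expect is bookkeeping the residuation when $Z$ is non-reduced and its support clusters on a line or conic: one must show that the iteration of the residual sequence cannot "leak" cohomology into a configuration other than the three listed ones, and in particular that the third, most delicate case is exactly characterized by the numerical conditions $\deg(Z\cap L)=t+1$ and $\deg(\text{components supported on }L)\ge 2t+2$. Handling this cleanly requires choosing $H$ to contain $L$ rather than generically, so that $Z\cap H\supseteq (Z\cap L)$ plus the fat structure, and then invoking the planar classification on this special plane while simultaneously controlling $\Res_H(Z)$; the degree constraint $\deg(Z)\le 3t$ is what prevents more than one genuinely bad plane and forces the induction to terminate. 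The remaining verifications — that a conic $D\subset M$ with $\deg(D\cap Z)\ge 2t+2$ is the only way $2\tau+2$ or $2\tau+3$ coplanar points on a conic can occur, and that these cases are mutually compatible with the stated hypotheses — are routine Bezout and cohomology-of-$\PP^1$ computations which I would not spell out in detail.
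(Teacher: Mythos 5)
Your overall strategy (residuate along planes until the cohomology is forced onto a plane section, then invoke the Ellia--Peskine classification) is indeed the paper's strategy, but as written the proposal has a genuine gap at its central step. First, ``choose a general plane $H$'' does not work: a general plane misses the zero-dimensional scheme $Z$ entirely, so $\Res_H(Z)=Z$ and the iteration neither terminates nor transfers any cohomology. The planes must be chosen to \emph{maximize} $e_i:=\deg(Z_{i-1}\cap N_i)$ at each step. This choice is not a technicality: it gives $e_1\ge e_2\ge\cdots$ and $\sum_i e_i=\deg(Z)\le 3t$, and if $c$ is the first index with $h^1(N_c,\Ii_{Z_{c-1}\cap N_c,N_c}(t+1-c))>0$, the planar lemma forces $e_c\ge t+3-c$, whence $c(t+3-c)\le 3t$ and therefore $c\in\{1,2,3\}$ or $c=t$. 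Your sketch asserts that ``one lands on a plane section with $h^1$ nonzero'' with $\tau\ge t$ and degree $\le 3t$, but the twist at step $c$ is $t+1-c$ and the relevant degree is $e_c$, so without controlling $c$ you cannot verify the hypotheses of \cite[Corollaire 2]{ep} (or of the auxiliary planar lemma) at all. The cases $c=3$ (excluded because $Z$ spans $M$) and $c=t$ (where $Z$ is in linearly general position and one needs the Eisenbud--Harris theorem on schemes in linearly general position) are entirely absent from your argument.

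Second, the third alternative of the statement is not ``separated out'' by inspection as you suggest; it is \emph{produced} by a specific analysis in the case $c=2$. There one finds a line $R$ with $\deg(R\cap Z_1)\ge t+1$, runs a second residuation using planes the first of which is required to contain $R$, and obtains a second line $L$ with $\deg(L\cap Z)=t+1$. The trichotomy is then: if $R\cap L=\emptyset$, a quadric through $R\cup L$ together with the vanishing $h^1(\Ii_{R\cup L}(t))=0$ yields a contradiction with $\deg(Z)\le 3t$; if $R\ne L$ meet, the reducible conic $R\cup L$ realizes the second alternative; and only when $R=L$ does the third alternative occur, which is how the numerical conditions $\deg(Z\cap L)=t+1$ and degree $\ge 2t+2$ of the components supported on $L$ arise. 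Your proposal correctly anticipates that one must sometimes take $H\supset L$, but the double-residuation mechanism and the disjoint-lines contradiction, which are what actually pin down the third case, are missing.
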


\begin{proof}
Set $Z_0:= Z$. Let $N_1\subset M$ be a plane such that $e_1:= \deg (Z\cap N_1)$ is maximal. Set $Z_1:= \Res _{N_1}(Z_0)$. For each integer
$i\ge 2$ define recursively the plane $N_i$, the integer $e_i$ and the zero-dimensional scheme $Z_i$ in the following way. Let $N_i\subset M$ be any hyperplane
such that $e_i:= \deg (Z_{i-1}\cap N_i)$ is maximal. Set $Z_i:= \Res _{N_i}(Z_{i-1})$. For each $i\ge 1$ we have an exact sequence
\begin{equation}\label{eq1}
0 \to \Ii _{Z_i}(t-i) \to \Ii _{Z_{i-1}}(t+1-i) \to \Ii _{Z_{i-1}\cap N_i,N_i}(t+1-i) \to 0
\end{equation}
We have $e_i\ge e_{i-1}$ for all $i$. Since any degree $3$ subscheme of $M$
is contained in a plane, if $e_i\le 2$, then $Z_{i-1}\subset N_i$ and $Z_i=\emptyset$. Since $\deg (Z) \le 3t$, there is an integer $i$ such that $1\le i \le t$ and $Z_i=\emptyset$.
From (\ref{eq1}) we get an integer $i\in \{1,\dots ,t\}$ such that $h^1(N_i,\Ii _{Z_{i-1}\cap N_i,N_i}(t+1-i)) >0$. 
Indeed, the fact that $Z_i$ is empty for some index $1 \le i \le t$ forces the cohomology of the ideal sheaf of $Z_i$ to be that of the ambient projective plane $N_i$.
We call $c$ the minimal integer $i$. Since $h^1(N_c,\Ii _{Z_{i-1}\cap N_c,N_c}(t+1-c)) >0$, either $\deg (Z_{i-1}\cap N_c,N_c)) \ge 2(t+1-c)+2$ or there is a line $L$ with
 $\deg (L_c\cap Z_{i-1}\cap N_c)\ge t+3-c$ (\cite[Lemma 34]{bgi}). In particular, since $c\le t$, we have $e_c\ge t+3-c$.
Since the sequence $\{e_i\}_{i\ge 1}$ is non-increasing, we have $ce_c \ge c(t+3-c)$. Since $\sum _{i\ge 1} e_i =\deg (Z)\le 3t$,
we get $c(t+3-c) \le 3t$. Set $\psi (x) =x(t+3-x)$. The function $\psi$ is strictly increasing if $1\le x \le (t+3)/2$ and strictly decreasing if $x> (t+3)/2$. Since $\psi (t) =3t$
and $\psi (3) =3t$, we get that either $1\le c\le 3$ or $c=t$.

\quad (a) Assume $c=1$. Since $Z$ spans $M$, we have $e_1\le \deg (Z)-1$. Since $e_1\le \deg (Z)-1$, we have $e_1<3t$. By \cite[Corollaire 2]{ep} (see also \cite[Remarque (i)]{ep}) 
either there is a line $R\subset N_1$ with $\deg (R\cap Z)\ge t+2$ or there is a conic $D\subset N_1$ such that $\deg (D\cap Z) \ge 2t+2$.

\quad (b) Assume $c=2$. Since $e_1\ge e_2$, we have $e_2\le \deg (Z)/2 \le 3t/2$. Since $c=2$ and $h^1(N_2,\Ii _{Z_1 \cap N_2,N_2}(t-1)) >0$, by \cite[Lemma 34]{bgi} either
$e_2 \ge 2t$, which is a contradiction, or there is a line $R\subset N_2$ such that $\deg (R\cap Z_1) \ge t+1$.
If $\deg (R\cap Z)\ge t+2$, then we are done. Hence we may assume $\deg (Z\cap R)=t+1$. Set $W_0:= Z$. Let $M_1\subset M$ be a plane containing $R$ and for
which $f_1:= \deg (M_1\cap Z)$ is maximal. Since $Z$ spans $M$ we have $f_1\ge t+2$. Set $W_1:= \Res _{M_1}(Z)$.  For each integer
$i\ge 2$ define recursively the plane $M_i$, the integer $f_i$ and the zero-dimensional scheme $W_i$ in the following way. Let $M_i\subset M$ be any hyperplane
such that $f_i:= \deg (W_{i-1}\cap M_i)$ is maximal. Set $W_i:= \Res _{N_i}(W_{i-1})$. We have $f_i\ge f_{i+1}$ for all $i\ge 2$, but we do not claim that
$f_1\ge f_2$ (indeed, $M_1$ is required to contain $R$, while the $M_i$ with $i \ge 2$ are not). Since any degree $3$ subscheme of $M$
is contained in a plane, if $f_i\le 2$, then $W_{i-1}\subset M_1$ and $W_i=\emptyset$. Since $\sum _{i\ge i} f_i =\deg (Z)$ and $f_1\ge t+2$, we have $f_i=0$ for some $i<t$.
Using the residual exact sequences of the planes $M_i$ we get the existence of a minimal integer $s\in \{1,\dots ,t-1\} $ such that $h^1(M_s,\Ii _{W_{s-1}\cap M_s,M_s}(t+1-s)) >0$.
We get $f_s\ge t+3-s$. Since $f_1\ge t+2$, we get $1\le s \le 2$. If $s=1$, then we use step (a) with $M_1$ instead of $N_1$. Now assume $s=2$. Since $f_2\le \deg (Z) -f_1\le 2t-2$ and $h^1(M_2,\Ii _{Z_1\cap M_2,M_2}(t-1)) =0$, there is a line $L\subset M_2$ such that $\deg (L\cap Z_1)\ge t+1$. If $\deg (L\cap Z)\ge t+2$, then the lemma is true.
Hence we may assume that $\deg (Z\cap L)=t+1$. 

First assume $R\cap L =\emptyset$. Let $Q\subset M$ be a general quadric surface containing $L\cup R$. Call $|\Oo _Q(1,0)|$ the ruling of $Q$ containing $R$ and $L$.
The residual scheme $\Res _Q(Z)$ of $Z$ has degree
$\deg (Z) -\deg (Z\cap Q) \le 3t -(2t+2) =t-2$ and in particular $h^1(M,\Ii _{\Res _Q(Z),M}(t-2)) =0$. The residual exact sequence of $Q$ gives $h^1(Q,\Ii _{Z\cap Q,Q}(t))
\ge h^1(M,\Ii _{Z,M}(t)) >0$. 

\quad {\emph{Claim:}} We have $h^1(M,\Ii _{R\cup L}(t)) =0$ for every $t\ge 1$.

\quad {\emph{Proof of the Claim:}} Take $p\in L$. Since $R\cap L=\emptyset$, $\{p\}\cup R$ spans a plane, $H$. We have $(L\cup R)\cap H =R\cup \{p\}$ and hence $h^1(H,\Ii _{(L\cup R)\cap H,H}(t)) =0$ for all $t\ge 1$. The residual $\mathrm{Res}_H(L\cup R)$ of $L\cup R$ with respect to $H$ is the line $L$, because $L\cup R$ is reduced and $L\nsubseteq H$. Therefore the residual sequence of $H$ in $\PP^3$ gives the following exact sequence:
$$
0\to \Ii _L(t-1) \to \Ii _{L\cup R}(t)\to \Ii _{(L\cup R)\cap H,H}(t)\to 0.
$$
Since $h^1(\Ii _L(t-1)) =0$ for all $t>0$, we get the Claim.

Since $\deg (Z\cap L) = \deg (Z\cap R) =t+1$ and $R\cap L=\emptyset$, we have $h^1(R\cup L,\Ii _{(R\cup L)\cap Z}(t)) =h^1(R,\Ii _{R\cap Z}(t)) +h^1(L,\Ii _{L\cap Z}(t)) =0$. 

The Claim gives $h^1(M,\Ii _{Z\cap (R\cup L}(t)) =0$. Hence $h^1(Q,\Ii _{Z\cap (R\cup L),Q}(t)) =0$.
 The residual sequence
$$
0 \to \Ii _{\Res _{R\cup L}(Z\cap Q)}(t-2,t)) \to \Ii _{Z\cap Q,Q}(t,t) \to \Ii _{(R\cup L)\cap Z,R\cup L}(t,t)\to0
$$
gives $h^1(Q,\Ii _{\Res _{R\cup L}(Z\cap Q)}(t-2,t)) >0$.
Since $\deg (\Res _{R\cup L}(Z\cap Q)) = \deg (Z\cap Q) -2t-2 \le t-2$, we get a contradiction.

Now assume $R\cap L \ne \emptyset$. If $R\ne L$, then we may take the reducible conic $R\cup L$, because $R\subset M_1$ and $\deg (L\cap \Res _{M_1}(Z)) =t+1$.

Now assume $R=L$. This is the last case of the statement of the lemma.

\quad ({c}) Assume $c=3$. Since $\psi (3) =3t$, we get $e_1=e_2= e_3 =t$. Since $e_3=t$ and $h^1(N_3,\Ii _{Z_2\cap N_3,N_3}(t-2)) >0$, there is a line
$R\subset N_3$ such that $\deg (Z_2\cap R) =t$. Since $Z$ spans $M$, there is a plane $N'$ such that $R\subset N'\subset M$ and $N'\cap Z\supsetneq Z\cap R$.
Hence $e_1\ge \deg (N'\cap Z)>t$, a contradiction.

\quad (d) Assume $c=t$. We get $\deg (Z)=3t$ and $e_i=3$ for all $i$. In particular $e_1=3$, i.e. $Z$ is in linearly general position. Since $\deg (Z) \le 3t+1$, the contradiction comes from \cite[Theorem 3.2]{eh}.
\end{proof}

\begin{lemma}\label{c2.1}
Fix an integer $t>0$. Set $M:= \PP^3$ and let $Z\subset M$ a  zero-dimensional and curvilinear scheme spanning $M$ and with $\deg (Z) \le 3t$. We have
$h^1(M,\Ii _{Z,M}(t))$ $\ne 0$ if and only if either there is a line $R\subset M$ with $\deg (R\cap Z)\ge t+2$ or there is a conic $D\subset M$
such that $\deg (D\cap Z)\ge 2t+2$.
\end{lemma}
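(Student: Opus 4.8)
The plan is to bootstrap from Lemma \ref{c1.1}. First dispose of $t=1$: a zero-dimensional scheme spanning $\PP^3$ has degree $\ge 4 > 3 = 3t$, so the hypotheses are vacuous and we may assume $t\ge 2$. The ``if'' direction is immediate, since the two alternatives in the statement are precisely the first two alternatives of Lemma \ref{c1.1}, whose ``if'' implication is already part of that lemma. So only the ``only if'' direction requires work: assume $h^1(M,\Ii_{Z,M}(t))\ne 0$. Since $Z$ spans $M$ and $\deg(Z)\le 3t$, Lemma \ref{c1.1} applies; either there is a line $R$ with $\deg(R\cap Z)\ge t+2$ or a conic $D$ with $\deg(D\cap Z)\ge 2t+2$ (and we are done), or we are in the third alternative of Lemma \ref{c1.1}: a line $L$ with $\deg(Z\cap L)=t+1$ such that the union $W:=\bigcup_{p\in L}Z_p$ of the connected components of $Z$ supported on $L$ has $\deg(W)\ge 2t+2$. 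Thus everything reduces to showing that, for curvilinear $Z$, this third alternative cannot hold together with $h^1(M,\Ii_{Z,M}(t))\ne 0$ unless one of the first two does.

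For this I would re-examine the filtration argument in the proof of Lemma \ref{c1.1}. The third alternative is produced there only in the last subcase of step (b) --- the one labelled ``$R=L$'' --- where one has a plane $M_1\supset L$ maximising $\deg(Z\cap M_1)$, with $\deg(Z\cap M_1)\ge t+2$ and $\deg(\Res_{M_1}(Z)\cap L)\ge t+1$. The crucial local input is curvilinearity: each component $Z_p$ ($p\in L$) is a length-$\ell_p$ subscheme of a smooth germ $\gamma_p$, and residualising such a scheme with respect to a plane $H$ of contact order $k_p$ with $\gamma_p$ merely replaces it by the length-$\max(0,\ell_p-k_p)$ subscheme of the \emph{same} germ; hence, writing $m_p$ for the contact order of $\gamma_p$ with $L$, one has $\deg(Z_p\cap L)=\min(\ell_p,m_p)$ and $\deg(\Res_H(Z_p)\cap L)=\min(\ell_p-k_p,m_p)$ whenever $L\subset H$. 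Plugging these identities into the two inequalities above constrains the germs $\gamma_p$ very tightly --- their jets must lie on $L$ to high order, then leave $L$, then leave $M_1$, with essentially no room left because $\deg(Z)\le 3t$ --- and makes $\Res_{M_1}(Z)$ a much smaller curvilinear scheme still concentrated along $L$. Because the minimal index in the filtration of Lemma \ref{c1.1} is $c=2$, not $1$, one has $h^1(M_1,\Ii_{Z\cap M_1,M_1}(t))=0$, so the residual exact sequence of $M_1$ yields a surjection $H^1(\Ii_{\Res_{M_1}(Z)}(t-1))\twoheadrightarrow H^1(\Ii_Z(t))$; since $\deg(\Res_{M_1}(Z))\le \deg(Z)-(t+2)\le 2t-2$ and $\Res_{M_1}(Z)$ lies on a small configuration through $L$, a direct computation --- or an induction on $\deg(Z)$ reusing Lemma \ref{c1.1} with parameter $t-1$ --- together with a count of $h^0(\Ii_{Z\cap M_1,M_1}(t))$ shows that this connecting map is surjective, whence $h^1(\Ii_Z(t))=0$, against the hypothesis. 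Conceptually: a curvilinear $Z$ landing in the ``$R=L$'' subcase is contained in the union of the line $L$ with an arithmetically Cohen--Macaulay curve of degree $\le 3$ meeting $L$ to high order, and a zero-dimensional scheme of degree $\le 3t\le 3t+1$ on such a curve imposes independent conditions on $|\Oo_{\PP^3}(t)|$. One must also check that the remaining subcases of step (b) of Lemma \ref{c1.1} survive verbatim (the case $R\cap L=\emptyset$ is excluded there outright, and the case $R\ne L$ produces the reducible conic $R\cup L$, giving the second alternative), and that nothing new appears when several components of $Z$ lie on $L$.

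The step I expect to be the main obstacle is this final $h^1$-vanishing. Simply bounding $h^1(\Ii_Z(t))$ above by the $h^1$ of the residual scheme on $L$ is not enough, since that $h^1$ can be positive (for instance $h^1(\PP^3,\Ii_{L^{(t+1)}}(t-1))=1$): one genuinely has to control the connecting homomorphism in the residual sequence, equivalently to prove the ACM-type statement that a curvilinear scheme which is ``flat along $L$, then planar, then spatial'' of degree $\le 3t$ imposes independent conditions on $|\Oo_{\PP^3}(t)|$, and to make this uniform over all such configurations and over the multi-component case.
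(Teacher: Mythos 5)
Your reductions are all sound and coincide with the paper's: the case $t=1$ is vacuous, the ``if'' direction is contained in Lemma \ref{c1.1}, and the entire content of the lemma is to exclude, for curvilinear $Z$, the third alternative of Lemma \ref{c1.1}. But that exclusion is exactly where your argument stops being a proof. You propose to control the connecting homomorphism $H^0(\Ii_{Z\cap M_1,M_1}(t))\to H^1(\Ii_{\Res_{M_1}(Z)}(t-1))$ well enough to force $h^1(\Ii_Z(t))=0$, equivalently to prove an ACM-type statement that a curvilinear scheme ``flat along $L$, then planar, then spatial'' of degree $\le 3t$ imposes independent conditions on $|\Oo_{\PP^3}(t)|$ --- and you say yourself in your final paragraph that you do not see how to do this. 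That is not a loose end to be tidied: it is the lemma. The local bookkeeping of contact orders of the germs $\gamma_p$ with $L$ and $M_1$ constrains the configuration but does not by itself yield any cohomology vanishing, and the claimed containment of $Z$ in the union of $L$ with an ACM curve of degree $\le 3$ is asserted rather than derived. As written, the only if direction is not established.

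The paper closes this gap by a different mechanism that never proves a vanishing at level $t$. One takes $Z$ minimal with $h^1(\Ii_Z(t))>0$; a residual exact sequence with respect to a quadric singular along $L$, combined with minimality, shows that $Z$ consists only of connected components whose reduction lies on $L$ and that $\deg(Z)=2t+2$. Since every degree $2t+1$ subscheme $W\subset Z$ meets each line in degree $\le t+1$, Lemma \ref{c1.1} gives $h^1(\Ii_W(t))=0$, whence $h^1(\Ii_Z(t))=1$ exactly. One then descends one degree and traps $h^1(\Ii_Z(t-1))$ between incompatible bounds: because $h^1(N,\Ii_{Z\cap N,N}(t))=0$ for every plane $N$, the bilinear-lemma argument of Lemma \ref{nn1} gives $h^1(\Ii_Z(t-1))\ge 3+h^1(\Ii_Z(t))=4$; on the other hand, for a plane $N\supset L$ the residual exact sequence of $N$, together with $\deg(Z\cap L)=t+1$, $\deg(Z\cap N)\le 2t-1$ and $\deg(\Res_N(Z))\le t+1$ (this is where curvilinearity is actually exploited), gives $h^1(\Ii_Z(t-1))\le 1+2=3$. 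This sandwich at level $t-1$ is the idea your proposal is missing; without it, or a genuine proof of your surjectivity claim at level $t$, the argument does not go through.
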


\begin{proof}
The ``~if~'' part is trivial. To prove the other implication it is sufficient to exclude the last case of the statement of Lemma \ref{c1.1}. By \cite[Corollaire 2]{ep} (see also \cite[Remarque (i)]{ep}) 
we may assume that $h^1(N,\Ii _{Z\cap N,N}(t)) =0$ for every plane $N$. 

Assume that we are in the last case of Lemma \ref{c1.1} and call $L$ the associated line. We may take $Z$ minimal with the property that $h^1(M,\Ii _{Z,M}(t)) >0$. 
Let $Q$ be a quadric surface containing $L$ in its singular locus. Since $\deg (\Res _Q(Z)) \le 3t-2t-2 \le t-2$, we have $h^1(M,\Ii _{\Res _Q(Z)}(t-2)) =0$. Therefore
the residual exact sequence of $Q$ gives $h^1(Q,\Ii _{Z\cap Q,Q}(t)) >0$ and $h^1(M,\Ii _{Z\cap Q,M}(t)) >0$. The minimality of $Z$ gives $Z\subset Q$. Taking $Q =N_1+N_2$
in step (b) of the proof of Lemma \ref{c1.1} we also get that only the connected components of $Z$ whose reduction are contained in $L$ arise (for a minimal $Z$), 
hence we reduce to the case $\deg (Z)=2t+2$. 

Let $W\subset Z$ be any degree $2t+1$ subscheme.
Since $\deg (W\cap D)\le \deg (Z\cap D)\le t+1$ for each line $D$, Lemma \ref{c1.1} gives $h^1(M,\Ii _{W,M}(t)) =0$.
Hence $h^1(M,\Ii _{Z,M}(t)) =1$. Since $h^1(N,\Ii _{Z\cap N,N}(t)) =0$ for every plane $N$, as in \cite{da} we get $h^1(M,\Ii _{Z,M}(t-1)) \ge 3+h^1(M,\Ii _{Z,M}(t))=4$.
Let $N$ be any plane containing $L$. We have $h^1(N,\Ii _{Z\cap N}(t-1)) =1$, because $\deg (Z\cap L)=t+1$ and $\deg (Z\cap N)\le 2(t-1)+1$ (use the residual exact sequence
of $L$ in $N$). Since $\deg (\Res _N(Z)) \le t+1$, we have $h^1(M,\Ii _{\Res _N(Z)}(t-2)) \le 2$. Hence the residual exact sequence of $N$
gives $h^1(M,\Ii _{\Res _N(Z)}(t-1)) \le 2+1$, a contradiction.
\end{proof}

\begin{lemma}\label{bd2}
Let $H\subset \PP^4$ be a hyperplane. Let $S\subset H$ be a set of $12$ points in uniform position and spanning $H$.

\quad (a) $h^1(H,\Ii _{S,H}(3)) \ge 2$ if and only if $S$ is contained in a rational normal curve of $H$ and in this case we have $h^1(H,\Ii _{S,H}(3))=2$;

\quad (b) $h^1(H,\Ii _{S,H}(3))=1$ if and only if $S$ is contained in an integral curve $T\subset H$, which is the complete intersection of two quadric surfaces.
\end{lemma}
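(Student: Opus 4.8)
The plan is to read all three assertions off the Hilbert function $h_S$ of $S$ inside $H\cong\PP^3$. From $0\to\Ii_{S,H}(3)\to\Oo_H(3)\to\Oo_S(3)\to 0$ and $\deg(S)=12$ one gets $h^1(H,\Ii_{S,H}(3))=h^0(H,\Ii_{S,H}(3))-8$, so (a) says $h^0(H,\Ii_{S,H}(3))\ge10$ and (b) says $h^0(H,\Ii_{S,H}(3))=9$. The two ``~if~'' directions are direct computations on the curve containing $S$, which in each case is arithmetically Cohen--Macaulay, using $0\to\Ii_{C,H}(3)\to\Ii_{S,H}(3)\to\Ii_{S,C}(3)\to 0$. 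If $S$ lies on a rational normal curve $D\cong\PP^1$ of $H$, then $h^0(H,\Ii_{D,H}(3))=10$ while $\Ii_{S,D}(3)\cong\Oo_{\PP^1}(9-12)=\Oo_{\PP^1}(-3)$ has no section, so $h^0(H,\Ii_{S,H}(3))=10$, i.e.\ $h^1=2$. If $S$ lies on an integral complete intersection $T$ of two quadrics, then $h^0(H,\Ii_{T,H}(3))=8$ while $\Ii_{S,T}(3)$ is a rank-one torsion-free sheaf with $\chi=0$ on the integral curve $T$ of arithmetic genus $1$, hence with at most one section; so $h^0(H,\Ii_{S,H}(3))\le9$, i.e.\ $h^1\le1$, with $h^1=1$ precisely when $S\in|\Oo_T(3)|$.

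For the ``~only if~'' directions I would establish the trichotomy that $h^1(\Ii_{S,H}(3))$ equals $2$, is $\le1$, or equals $0$ according as $q:=h^0(H,\Ii_{S,H}(2))$ is $\ge3$, $=2$, or $\le1$; together with the ``~if~'' computations this yields (a) and (b). Assume $q\ge2$. Two distinct members of $|\Ii_{S,H}(2)|$ share no surface component: such a component would be a plane (so one of the quadrics would be reducible, pushing $S$ into a plane by uniform position and non-degeneracy) or a common irreducible quadric (so the two members coincide). Hence they meet in a curve $\Gamma$ of degree $4$ with $S\subseteq\Gamma$, and uniform position forces $S$ into a single irreducible component $\Gamma_0$ of $\Gamma$; since $S$ is non-degenerate, $3\le\deg\Gamma_0\le4$. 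If $\deg\Gamma_0=3$, then $\Gamma_0$ is a rational normal curve and $q\ge3$; if $\deg\Gamma_0=4$, then $\Gamma=\Gamma_0$ is irreducible (and reduced, else $S$ would lie in its degree-$\le2$ reduction), hence an integral complete intersection of two quadrics, of arithmetic genus $1$, and $q=2$. With the ``~if~'' step this identifies case (a) when $q\ge3$ and case (b) when $q=2$. There remains to show that $q\le1$ forces $h^1(\Ii_{S,H}(3))=0$.

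This is where the real work lies. If $q=0$, then $h_S(2)=10$, and the standard structure of the first difference $\Delta h_S$ for points in uniform position --- in particular that $\Delta h_S$ has no plateau at a value $v$ below the generic one, a plateau of height $v$ forcing $S$ onto a curve of degree $v$, which is impossible here for $v\le2$ by non-degeneracy and for $3\le v\le5$ by incompatibility with the remaining values --- forces $\Delta h_S=(1,3,6,2)$, hence $h_S(3)=12$ and $h^1(\Ii_{S,H}(3))=0$. If $q=1$, then $S$ lies on a unique quadric $Q$, necessarily of rank $\ge3$ (a reducible quadric would push $S$ into a plane); as $\mathrm{Res}_Q(S)=\emptyset$, the residual sequence of $Q$ gives $h^1(\Ii_{S,H}(3))=h^1(Q,\Ii_{S,Q}(3))$. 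Identifying $Q$ with $\PP^1\times\PP^1$ (or a quadric cone) and running on $Q$ the residuation argument of Lemma \ref{c1.1} together with \cite[Corollaire 2]{ep}, and using that uniform position allows at most two points of $S$ on a line and at most three on a plane (hence at most two on a ruling of $Q$, at most three on a $(1,1)$-curve, and --- by a short comparison of Hilbert functions --- not eleven on a twisted cubic), one concludes that the only possibility with $h^1(Q,\Ii_{S,Q}(3))>0$ is $S$ contained in a curve of $Q$ of bidegree $(2,2)$ (or its analogue on the cone); but such a curve is $Q\cap Q'$ for a second quadric $Q'\supseteq S$, contradicting $q=1$. Hence $h^1(\Ii_{S,H}(3))=0$, completing the trichotomy.

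The hardest step is thus the exclusion of the ``~near-maximal~'' postulations of $12$ points in uniform position --- for instance the would-be first differences $(1,3,5,2,1)$ of a set on a single quadric, or $(1,3,6,1,1)$ of a set in $\PP^3$ --- and this is precisely where the Ellia--Peskine bound and the structure theory of Hilbert functions of points in uniform position are needed, in the spirit of the proof of Lemma \ref{c1.1}. The case $q\ge3$ and the identification of $\Gamma$ when $q=2$ are, by comparison, routine Bézout-and-classification arguments.
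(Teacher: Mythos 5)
Your overall architecture coincides with the paper's: both proofs reduce everything to a trichotomy on $q:=h^0(H,\Ii_{S,H}(2))$, and your treatment of $q\ge 2$ (two quadrics cut out a degree-$4$ curve, uniform position pushes $S$ onto a single integral component of degree $3$ or $4$, Bezout separates the two cases) is exactly the paper's argument, just written out in more detail. Your computation of the ``if'' directions via liaison on the ACM curve is also the paper's, and your observation that for $S$ on an integral complete intersection $T$ one only gets $h^1\le 1$, with equality precisely when $\Oo_T(S)\cong\Oo_T(3)$, is a correct refinement of the statement as printed (harmless for the applications, which only use the ``only if'' directions and the bound $h^1\le 2$).

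The divergence, and the one genuine gap, is in the case $q\le 1$. The paper dispatches it in four lines: pick $A\subset S$ with $\sharp(A)=8$, so that $h^1(H,\Ii_{A,H}(2))=0$ and $h^0(H,\Ii_{A,H}(2))=2$ by uniform position; a general quadric $Q\supset A$ meets $S$ exactly in $A$; then adding the remaining four points $o_1,\dots,o_4$ one at a time, the reducible cubics $Q\cup N_i$ (with $N_i$ a plane through $o_1,\dots,o_{i-1}$ missing $o_i$) show each point imposes a new condition on cubics, whence $h^0(H,\Ii_{S,H}(3))=12-4=8$ and $h^1=0$. (Equivalently, the Castelnuovo-type inequality $h_S(3)\ge\min(12,h_S(2)+3)$ for points in uniform position spanning $\PP^3$ gives $h_S(3)=12$ at once whenever $h_S(2)\ge 9$, covering $q=0$ and $q=1$ simultaneously.) Your $q=0$ case reaches the right conclusion but via a ``no plateau'' principle for $\Delta h_S$ that you do not justify and that is more delicate in $\PP^3$ than you suggest; the clean statement you actually need is the Castelnuovo inequality above. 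Your $q=1$ case is not a proof as written: you propose to restrict to the unique quadric $Q$ and ``run the residuation argument of Lemma \ref{c1.1} together with \cite[Corollaire 2]{ep}'', but Lemma \ref{c1.1} is a statement about $\PP^3$ and \cite{ep} about $\PP^2$; neither applies on $\PP^1\times\PP^1$ or on the quadric cone, so you would have to prove from scratch a classification of $12$-point schemes on a quadric surface with $h^1(Q,\Ii_{S,Q}(3))>0$ (lines of the rulings, curves of the various bidegrees, and the cone case separately). That is a nontrivial side lemma replacing a four-line argument, and until it is supplied the $q=1$ case is open in your write-up.
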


\begin{proof}
If $S$ is contained in a rational normal curve (resp. an integral complete intersection of two quadric surfaces), then $h^1(H,\Ii _{S,H}(3))=2$
(resp. $h^1(H,\Ii _{S,H}(3))=1$).
Since $S$ is in linearly general position, we have $h^1(H,\Ii _{S'}(3)) =0$ for each $S'\subset S$ with $\sharp (S') =10$. Hence $h^1(H,\Ii _{S,H}(3))\le 2$.
If $h^0(H,\Ii _{S,H}(2)) \ge 2$, since $S$ is in uniform position we get that $S$ is contained in a integral curve with either degree $3$ or the intersection
of $2$ quadric surfaces. Hence we may assume $h^0(H,\Ii _{S,H}(2)) \le 1$. There is $A\subset S$ with $\sharp (A) =8$ and $h^0(H,\Ii _{A,H}(2)) =2$, i.e.
$h^1(H,\Ii _{A,H}(2))=0$. Take an ordering $o_1,o_2,o_3,o_4$ of $S\setminus A$. Set $A_0:= A$.
For $i=1,2,3,4$ set $A_i:= A\cup \{o_1,\dots ,i\}$. It is sufficient to prove that $h^0(H,\Ii _{A_i,H}(3)) < h^0(H,\Ii _{A_{i-1},H}(3))$ for
$i=1,2,3,4$. Let $Q$ be a general quadric surface containing $A$. Since $S$ is in uniform position, we have $Q\cap S =A$. Let $N_i$ be any plane
not containing $o_i$ but containing $o_j$ for all $j<i$. The cubic surface $Q\cup A_i$ gives $h^0(H,\Ii _{A_i,H}(3)) < h^0(H,\Ii _{A_{i-1},H}(3))$.
\end{proof}

\begin{lemma}\label{m3}
Let $C\subset \PP^r$, $r\ge 2$, be a smooth rational curve. Let $M(d,r)$ denote the set of all smooth rational curves of degree $d$ in $\PP^r$.
$M(d,r)$ is smooth and irreducible of dimension $(r+1)d +r-3$.
Set $d:= \deg ({C})$ and take a zero-dimensional
scheme $Z\subset \PP^n$ such that $a:= \deg(Z) \le d+1$. Then $h^1(N_C(-Z)) =0$ and the set of all $X\in M(d,r)$ containing $Z$ has
dimension ($r+1)d+r-3 -(r-1)a$.
\end{lemma}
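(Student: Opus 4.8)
The plan is to establish the three assertions in turn. That $M(d,r)$ is smooth and irreducible of dimension $(r+1)d+r-3$ is standard: it is an open subscheme of the Hilbert scheme of $\PP^r$, and for a smooth rational curve $C\cong\PP^1$ of degree $d$ the normal bundle $N_C$ is a quotient of $T_{\PP^r}|_C$, which by the Euler sequence restricted to $C$ is itself a quotient of $\Oo_{\PP^1}(d)^{\oplus(r+1)}$; hence $N_C$ is globally generated on $\PP^1$, so $h^1(N_C)=0$, the Hilbert scheme is smooth at $[C]$, and its dimension there equals $\chi(N_C)=\deg N_C+\mathrm{rk}\,N_C=((r+1)d-2)+(r-1)=(r+1)d+r-3$. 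Irreducibility follows from that of the space of degree $d$ maps $\PP^1\to\PP^r$ (an open subset of a projective space of $(r+1)$-tuples of binary $d$-forms), which dominates $M(d,r)$ with $\mathrm{PGL}_2$-orbits as fibers.

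The crux is the vanishing $h^1(N_C(-Z))=0$, where $N_C(-Z):=N_C\otimes\Ii_{Z,C}$, so that the hypothesis is tacitly $Z\subset C$ (or one replaces $Z$ by $Z\cap C$, which only lowers the degree); thus $\Ii_{Z,C}\cong\Oo_{\PP^1}(-a)$ with $a:=\deg Z\le d+1$. I would prove that $N_C(-1)$ is globally generated. Twisting the restricted Euler sequence by $\Oo_C(-1)\cong\Oo_{\PP^1}(-d)$ gives
$$
0\to\Oo_{\PP^1}(-d)\to\Oo_{\PP^1}^{\oplus(r+1)}\to T_{\PP^r}(-1)|_C\to 0,
$$
so $T_{\PP^r}(-1)|_C$, being a quotient of a trivial bundle, is globally generated; twisting $0\to T_C\to T_{\PP^r}|_C\to N_C\to 0$ by $\Oo_C(-1)$ then exhibits $N_C(-1)$ as a quotient of $T_{\PP^r}(-1)|_C$, hence globally generated. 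Writing $N_C(-1)\cong\bigoplus_{i=1}^{r-1}\Oo_{\PP^1}(b_i)$ with every $b_i\ge 0$, we obtain $N_C(-Z)\cong\bigoplus_{i=1}^{r-1}\Oo_{\PP^1}(d+b_i-a)$, whose summands all have degree $\ge d-a\ge -1$; therefore $h^1(N_C(-Z))=0$.

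For the dimension of $M(d,r;Z):=\{X\in M(d,r):Z\subset X\}$ I would appeal to the deformation theory of curves containing a fixed zero-dimensional scheme: at $[X]$ the Zariski tangent space is $H^0(N_X(-Z))$ and the obstructions lie in $H^1(N_X(-Z))$. By the previous paragraph $h^1(N_X(-Z))=0$ for every smooth rational curve $X$ of degree $d$ through $Z$, so $M(d,r;Z)$ is smooth at each of its points, of dimension $h^0(N_X(-Z))=\chi(N_X(-Z))=(r+1)d+r-3-(r-1)a$. That this dimension is attained — i.e.\ $M(d,r;Z)\neq\emptyset$ when $a\le d+1$ — can be checked, at least for $Z$ in general position, from the fact that already the curves through a subscheme of degree $a-1$ sweep out $\PP^r$ (same dimension count); for the intended applications only the upper bound on $\dim M(d,r;Z)$ is needed, and this holds with no genericity assumption, since wherever the locus is nonempty it is smooth of exactly the stated dimension.

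The step carrying all the weight is the global generation of $N_C(-1)$: this is precisely what makes $a\le d+1=\deg\Oo_C(1)+1$ the correct threshold, and once it is in hand the rest is formal. The only other point to watch is that the deformation theory must be applied to the locally free sheaf $N_X\otimes\Ii_{Z,X}$, with obstruction space $H^1(N_X\otimes\Ii_{Z,X})$, even when $Z$ is non-reduced — which is harmless here, since $X\cong\PP^1$ forces $\Ii_{Z,X}$ to be a line bundle, i.e.\ $Z$ is a Cartier divisor on $X$.
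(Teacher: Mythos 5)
Your proof is correct and follows essentially the same route as the paper's: the Euler sequence exhibits $N_C$ as a quotient of $\Oo_C(1)^{\oplus(r+1)}$, so every summand of $N_C$ on $\PP^1$ has degree $\ge d$, giving $h^1(N_C(-Z))=0$ for $\deg(Z)\le d+1$, and the dimension statement then follows from deformation theory with tangent space $H^0(N_X(-Z))$ and obstruction space $H^1(N_X(-Z))$ (the paper cites Perrin for this). Your additional remarks — that one should read $Z\subset C$ (or replace $Z$ by $Z\cap C$) and that the dimension count is really an upper bound wherever the locus is nonempty — are accurate clarifications of points the paper leaves implicit.
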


\begin{proof}
Fix any $Y\in M(d,r)$. Since $T\PP^r$ is a quotient of $\Oo _{\PP^r}(1)$ by the Euler sequence and $X$ is smooth $N_X$ is a quotient of $\Oo _X(1)^{(r+1)}$.
Since $X$ is a smooth rational curve, we get $h^1(N_X(-W)) =0$ for every zero-dimensional scheme $W\subset X$ with $\deg (Z) \le d+1$. The Hilbert scheme
of all curves containing $W$ has $H^0(N_X(-W))$ as its tangent space and $H^1(N_X(-W))$ as an obstruction space (\cite[Theorem 1.5]{pe}).
Taking $W=\emptyset$
we get the smoothness and dimension of $M(d,r)$. The irreducibility of $M(d,r)$ is well-known. Taking $W = Z$ we get the other statements of the lemma.
\end{proof}

Let $\Ww$ denote the set of all quintic hypersurfaces of Cotterill, i.e. satisfying all properties proved in \cite{c2}. In particular each $W\in \Ww$ is a smooth quintic hypersurface containing finitely many rational curves of degree $\le 11$.

For any integer $b\ge 5$ let $\Delta _b$ denote the set of all non-degenerate $C\in M_{12}$ such that there is a line $L\subset \PP^4$ with $\deg (L\cap C) =b$. Set $\Delta '_7:= \cup _{b\ge 7} \Delta _b$.

\begin{remark}\label{0a00+} For any line $L\subset \PP^4$ let $A(L,b)$ denote the set  of all non-degenerate 
$C\in M_{12}$ such that $\deg (L\cap C) =b$. Since $\Delta _b=\emptyset$ if $b>12$, we have $\dim (A(L,b)) = 61-2b$
by Lemma \ref{m3}. Now, if $W$ is any quintic threefold and $C \subset W$, then by Bezout also $L \subset W$ as soon as $b \ge 6$. Since on each $W\in \Ww$ there are finitely many lines, if $W$ contains only finitely many $C \in A(L,b)$ 
for any fixed line $L\subset \PP^4$, then $W$ contains only finitely many $C \in \Delta _b$ as well. Hence to prove
that a general $W\in \Ww$ contains only finitely many elements of $\Delta _b$, by (\ref{one}) and (\ref{two})  
it is sufficient to test the element $C\in \Delta _b$ with $h^1(\Ii _C(5)) \ge 2b+1$.
\end{remark}

\begin{lemma}\label{0a8}
A general $W\in \Ww$ contains only finitely many $C\in \Delta '_7$.
\end{lemma}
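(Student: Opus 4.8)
The plan is to fix a line $L \subset \PP^4$ and control the curves $C \in A(L,b)$ with $b \geq 7$ that lie on a general $W \in \Ww$, then invoke Remark \ref{0a00+}. By that remark it suffices to treat $C \in \Delta_b$ with $h^1(\Ii_C(5)) \geq 2b+1 > 0$, so I would feed $T = C$ into the machinery of Lemma \ref{nn1}: if $h^1(M, \Ii_{C \cap M, M}(5)) = 0$ for every hyperplane $M$, then $h^1(\Ii_C(4)) \geq 4 + h^1(\Ii_C(5))$, and iterating down to degree $2$ would give $h^1(\Ii_C(2)) \geq 12 + h^1(\Ii_C(5)) \geq 13$, hence $h^0(\Ii_C(2)) \geq 3$ by \eqref{two}, contradicting Lemma \ref{0a1} (note $C$ is non-degenerate). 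So the real content is to handle the hyperplane sections $C \cap M$ where the vanishing $h^1(M, \Ii_{C\cap M, M}(t)) = 0$ fails — and this is where the hypothesis $b \geq 7$ is used.

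The key observation is that for $C \in A(L,b)$ a general hyperplane section $C \cap M$ is $12$ points in uniform position spanning $M \cong \PP^3$, while the special hyperplanes through $L$ carry a subscheme of degree $\geq b \geq 7$ on the line $L$. I would apply Lemma \ref{c1.1} (or rather Lemma \ref{c2.1}, since the scheme $C \cap M$ cut on a line or conic by a smooth curve is curvilinear) with $t$ running from $5$ down to $2$: the obstruction $h^1(M, \Ii_{C\cap M, M}(t)) \neq 0$ forces either a line $R$ with $\deg(R \cap C \cap M) \geq t+2$, or a conic $D$ with $\deg(D \cap C \cap M) \geq 2t+2$. Since $C$ is a smooth curve of degree $12$, Bezout bounds $\deg(R \cap C) \leq 12$ and, for the conic case, $\deg(D \cap C) \leq 24$ with any line component meeting $C$ in $\leq 12$ points; comparing these with the required thresholds as $t$ decreases, the only surviving configuration in the range $b \geq 7$ is the line $L$ itself with $\deg(L \cap C) = b$. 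Thus the single bad hyperplane direction is controlled by a fixed line, and one recovers a descent on $h^1$ of the same shape — losing only a bounded amount at each step — so that $h^1(\Ii_C(2))$ is still $\geq 13$ provided $h^1(\Ii_C(5))$ was large enough, which is guaranteed by Remark \ref{0a00+} since we only test $C$ with $h^1(\Ii_C(5)) \geq 2b+1$.

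To make the descent precise I would argue as follows. Write $S = C \cap M$ for a hyperplane $M$. If no hyperplane is bad, the clean descent above applies. Otherwise, for each bad $M$, run the residual exact sequence \eqref{eqnn1} and bound the drop $h^1(\Ii_C(t-1)) \geq h^1(\Ii_C(t)) + (\text{something})$ using that the bad locus on $S$ lives in degree $\leq \deg(L \cap C) = b \leq 12$ on $L$; the contribution of $L$ to $h^1(M, \Ii_{S,M}(t))$ is $\max(0, b - t - 1)$, which at $t = 2$ is $b - 3 \geq 4$. Combining the hyperplane-section $h^1$ with the generic descent step of Lemma \ref{nn1} (which still works in the directions transverse to $L$) yields $h^1(\Ii_C(2)) \geq 13$, and then Lemma \ref{0a1} closes the argument — so no such $C$ lies on a smooth quintic, forcing $\pi_2$ restricted to the family $\overline{A(L,b)}$ to be finite over a general $W$. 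Summing over the finitely many lines on $W$ and the finitely many values $b \in \{7, \dots, 12\}$ gives the lemma.

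The main obstacle I expect is the bookkeeping in the second-to-last paragraph: precisely matching the loss at each degree $t = 5, 4, 3, 2$ against the required total of $13$, and in particular checking that the exceptional cases of Lemma \ref{c1.1} — the conic $D$ and the "fat line" third alternative — really cannot occur for a \emph{smooth} degree-$12$ rational curve once $b \geq 7$ and $h^1(\Ii_C(5)) \geq 2b+1$. The conic case is the delicate one, since a smooth $C$ can meet a conic in up to $24$ points (two tangency/secant lines), so one has to use the uniform-position/monodromy properties of hyperplane sections of $C$ together with the degree constraint $\deg(C) = 12$ to rule it out; this is precisely the kind of analysis deferred to the subsequent Lemmas \ref{0a9}, \ref{0a11}, \ref{0a12} referenced in the introduction, and for $\Delta'_7$ the large value of $b$ makes it manageable.
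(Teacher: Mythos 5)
Your overall skeleton matches the paper's: use Remark \ref{0a00+} to reduce to $h^1(\Ii_C(5)) \ge 2b+1 \ge 15$, push this down to $h^1(\Ii_C(2)) \ge 13$, and contradict Lemma \ref{0a1}. But the descent mechanism you propose does not work as stated. You want to apply Lemma \ref{nn1}, whose hypothesis is that $h^1(M,\Ii_{C\cap M,M}(t))=0$ for \emph{every} hyperplane $M$; for $C\in\Delta'_7$ this fails for every hyperplane containing the line $L$ (since $\deg(L\cap C)\ge 7\ge t+2$ for all $t\le 5$), so the $+4$ gain is unavailable at every level $t=5,4,3$. Your patch --- that the descent ``still works in the directions transverse to $L$'' --- is not justified: the precise version (the bilinear lemma applied to a subspace $V\subset H^0(\Oo_{\PP^4}(1))$ complementary to $H^0(\Ii_L(1))$, as in the proof of Lemma \ref{0a11}) yields only $h^1(\Ii_C(t-1))\ge h^1(\Ii_C(t))+\dim V-1=h^1(\Ii_C(t))+1$, and even that requires first showing that every hyperplane not containing $L$ is good, i.e., ruling out other lines and conics --- analysis you defer to Lemmas \ref{0a9}, \ref{0a11}, \ref{0a12}, which in the paper are proved \emph{after} and \emph{using} Lemma \ref{0a8}, so you would be courting circularity. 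Moreover your ``clean'' claim $h^1(\Ii_C(2))\ge 12+h^1(\Ii_C(5))$ cannot be iterated down to $t=3$ in general: the general hyperplane section may lie on a rational normal curve, giving $h^1(H,\Ii_{C\cap H,H}(3))=2$ by Lemma \ref{bd2}, so Lemma \ref{nn1} is inapplicable at $t=3$ without the separate treatment of $\Delta_1,\Delta_2,\Delta_3$.

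The paper's proof needs none of this because it asks for no gain at all: $15-2=13$ already suffices. Take one \emph{general} hyperplane $H$. Then $C\cap H$ is $12$ points in uniform, hence linearly general, position, so at most $3$ of them lie in any plane; Lemma \ref{c2.1} then gives $h^1(H,\Ii_{C\cap H,H}(t))=0$ for $t=4,5$, since no line carries $t+2\ge 6$ of the points and no conic (being a plane curve) carries $2t+2\ge 10$ of them --- in particular your worry about $\deg(D\cap C)\le 24$ for the whole curve evaporates once you section by a general $H$. The exact sequence (\ref{nn3}) then gives $h^1(\Ii_C(3))\ge h^1(\Ii_C(4))\ge h^1(\Ii_C(5))\ge 15$, and Lemma \ref{bd2} bounds the loss at the last step: $h^1(\Ii_C(2))\ge h^1(\Ii_C(3))-2\ge 13$, whence $h^0(\Ii_C(2))\ge 3$ against Lemma \ref{0a1}. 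You have all these ingredients in hand; the gap is that you assemble them around the wrong lemma (the all-hyperplanes Lemma \ref{nn1} instead of the single general hyperplane in (\ref{nn3})) and never actually close the numerical bookkeeping that you yourself identify as the main obstacle.
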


\begin{proof}
By Remark \ref{0a00+} it is sufficient to test the non-degenerate curves $C\in M_{12}$ such that $h^1(\Ii _C(5)) \ge 15$. Take a general hyperplane $H\in \PP^4$.
Since $C\cap H$ is in uniform position, Lemma \ref{c2.1} gives $h^1(H,\Ii _{C\cap H,H}(t)) =0$ for $t=4,5$. The exact sequence 
\begin{equation}\label{nn3}
0 \to \Ii _C(t-1) \to \Ii _C(t)\to \Ii _{C\cap H,H}(t)\to 0
\end{equation}
gives $h^1(\Ii _C(3)) \ge h^1(\Ii _C(4)) \ge h^1(\Ii _C(5))$.
By Lemma \ref{bd2} we have $h^1(\Ii _C(2)) \ge h^1(\Ii _C(3)) -2\ge 13$. Hence $h^0(\Ii _C(2)) \ge 3$, contradicting Lemma \ref{0a1}.
\end{proof}

\begin{lemma}\label{0a9}
A general $W\in \Ww$ contains only finitely many $C\in \Delta _6$.
\end{lemma}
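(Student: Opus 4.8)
The plan is to run the argument in the proof of Lemma \ref{0a8}, but to gain one extra jump of size $+4$ from Lemma \ref{nn1} applied at level $t=5$; this precisely compensates for the weaker bound $h^1(\Ii _C(5)) \ge 13$ (rather than $\ge 15$) that is available for $\Delta _6$. By Remark \ref{0a00+} it suffices to treat a non-degenerate $C\in M_{12}$ with $\deg (L\cap C)=6$ for some line $L\subset \PP^4$ and $h^1(\Ii _C(5)) \ge 13$; as in that remark, since a general $W\in \Ww$ carries only finitely many lines, the relevant families $\Gamma$ may be taken inside $A(L,6)$ for a fixed $L$, so $\dim \Gamma \le \dim A(L,6) = 49$ by Lemma \ref{m3}. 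By Lemma \ref{0a8} we may also assume that no line meets $C$ in a scheme of length $\ge 7$. We shall deduce $h^0(\Ii _C(2)) \ge 3$, which by Lemma \ref{0a1} is incompatible with the existence of a smooth quintic $3$-fold containing $C$.

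First we claim that $h^1(M,\Ii _{C\cap M,M}(5)) = 0$ for \emph{every} hyperplane $M\subset \PP^4$. Since $C$ is smooth and non-degenerate, $C\not\subset M$, so $Z:= C\cap M$ is a curvilinear zero-dimensional scheme of degree $12 \le 3\cdot 5$. Moreover $Z$ spans $M$: otherwise $Z$ would lie in a plane $P\subset M$, forcing $Z = C\cap P$ of degree $12 = \deg (C)$; restricting to $C$ the two linear forms cutting out $P$ would then yield two proportional sections of $\Oo _C(1) \cong \Oo _{\PP^1}(12)$, hence $C$ degenerate, a contradiction. Thus Lemma \ref{c2.1} applies: $h^1(M,\Ii _{Z,M}(5)) \ne 0$ would force either a line $R\subset M$ with $\deg (R\cap C)\ge 7$, excluded by assumption, or a conic $D\subset M$ with $\deg (D\cap C)\ge 2\cdot 5 + 2 = 12$; since a conic lies in a plane, the latter again makes $C$ degenerate by the previous argument. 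Hence Lemma \ref{nn1} with $r=4$, $t=5$, $T=C$ gives $h^1(\Ii _C(4)) \ge 4 + h^1(\Ii _C(5)) \ge 17$.

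Now let $H$ be a general hyperplane, so that $C\cap H$ is a set of $12$ points in uniform position spanning $H$. For the general $C$ in $\Gamma$, and after setting aside the closed subfamilies in which $C$ carries a positive-dimensional family of $6$-secant lines or of $10$-secant conics, a general $H$ contains no such configuration, and Lemma \ref{c2.1} gives $h^1(H,\Ii _{C\cap H,H}(4)) = 0$. The restriction sequence (\ref{nn3}) with $t=4$ then yields $h^1(\Ii _C(3)) \ge h^1(\Ii _C(4)) \ge 17$, and with $t=3$, using $h^1(H,\Ii _{C\cap H,H}(3)) \le 2$ from Lemma \ref{bd2}, we obtain $h^1(\Ii _C(2)) \ge h^1(\Ii _C(3)) - 2 \ge 15$. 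Finally, from $0 \to H^0(\Ii _C(2)) \to H^0(\Oo _{\PP^4}(2)) \to H^0(\Oo _C(2)) \to H^1(\Ii _C(2)) \to 0$ we get $h^0(\Ii _C(2)) = 15 - 25 + h^1(\Ii _C(2)) \ge 5 \ge 3$, contradicting Lemma \ref{0a1}. Hence no such family $\Gamma$ can produce infinitely many curves on a general $W\in \Ww$, which proves the lemma.

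The main obstacle is the genericity used in the third paragraph: one has to rule out, or treat separately, the families in which the general $C$ carries a $2$-dimensional (or larger) family of $6$-secant lines, or a sufficiently large family of $10$-secant conics, since only for such curves can a general hyperplane section acquire a configuration with $h^1(H,\Ii _{C\cap H,H}(4)) \ne 0$. This is where the dimension bound (\ref{one}) over $A(L,6)$ and the analysis of secant-conic configurations (as in the remaining lemmas of this section) are needed.
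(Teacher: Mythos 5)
Your proof is correct and follows essentially the same route as the paper: reduce to $h^1(\Ii_C(5))\ge 13$ via Remark \ref{0a00+}, exclude $\Delta'_7$ by Lemma \ref{0a8}, gain $+4$ from Lemma \ref{nn1} at $t=5$ (checking its hypothesis via Lemma \ref{c2.1}), then descend through $t=4,3$ using (\ref{nn3}) and Lemma \ref{bd2} to get $h^0(\Ii_C(2))\ge 5$, contradicting Lemma \ref{0a1}. The ``main obstacle'' you flag at the end is not one: for a general hyperplane $H$ the section $C\cap H$ is in uniform, hence linearly general, position, so no line contains more than $2$ and no conic more than $3$ of its points, and Lemma \ref{c2.1} applies directly without setting aside any subfamilies.
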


\begin{proof}
By Remark \ref{0a00+} it is sufficient to test the non-degenerate curves $C\in M_{12}$ such that $h^1(\Ii _C(5)) \ge 13$. By Lemma \ref{0a8} we may assume
that $C\notin \Delta '_7$. By Lemmas \ref{nn1} and \ref{c1.1} we have $h^1(\Ii _C(4)) \ge 4+h^1(\Ii _C(5)) \ge 17$. Take a general hyperplane $H\in \PP^4$.
By Lemma \ref{c2.1} we have $h^1(H,\Ii _{C\cap H,H}(4)) =0$. The exact sequence (\ref{nn3}) gives $h^1(\Ii _C(3)) \ge h^1(\Ii _C(4)) $.
By Lemma \ref{bd2} we have $h^1(\Ii _C(2)) \ge h^1(\Ii _C(3)) -2\ge 15$. Hence $h^0(\Ii _C(2)) \ge 5$, contradicting Lemma \ref{0a1}.
\end{proof}

\begin{lemma}\label{0a11}
A general $W\in \Ww$ contains only finitely many non-degenerate $C\in M_{12}$ such that there is a conic $D\subset \PP^4$ with $\deg (D\cap C)\ge 10$ and if the conic is singular $C\cap D$ contains a curvilinear scheme of at least degree $10$.
\end{lemma}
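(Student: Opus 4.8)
The plan is to mirror the proofs of Lemmas \ref{0a8} and \ref{0a9}: first a dimension count reduces the statement to curves $C$ for which $h^1(\Ii _C(5))$ is large, and then a chain of hyperplane restrictions, combined with Lemmas \ref{nn1}, \ref{c2.1} and \ref{bd2}, forces $h^1(\Ii _C(2))\ge 13$, hence $h^0(\Ii _C(2))\ge 3$, contradicting Lemma \ref{0a1}. Throughout I would use Lemmas \ref{0a8} and \ref{0a9} to assume $C\notin \Delta _6\cup \Delta '_7$, so that every line meets $C$ in a scheme of degree $\le 5$.

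\emph{Dimension reduction.} Let $D$ be the conic of the statement, $P:=\langle D\rangle $ the plane it spans, and $Z:=D\cap C\subset D\subset P$, of degree $b:=\deg (D\cap C)$; note $Z$ is curvilinear (it contains the curvilinear subscheme of degree $\ge 10$ guaranteed by the hypothesis, and in fact $C$ being smooth makes $D\cap C$ itself curvilinear). Since $C$ is non-degenerate, $b\le \deg C=12$; moreover if $D$ is reducible or a double line then $b\le 2\cdot 5=10$, so $b=10$ there and $b\in \{10,11,12\}$ in general. Conics in $\PP ^4$ form an $11$-dimensional family, a fixed conic carries a family of curvilinear length-$b$ subschemes of dimension $\le b$ for a smooth conic and of dimension $\le 10$ for a double line (this is where the curvilinear hypothesis keeps that family small), and by Lemma \ref{m3} the curves containing a fixed length-$b$ scheme form a family of dimension $61-3b$. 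Hence the family $V\subseteq M_{12}$ of curves in question has $\dim V\le 72-2b$ when $D$ is smooth, and $\dim V\le 49$ when $D$ is a double line. By (\ref{one}) and (\ref{two}) it therefore suffices to test those $C$ with $h^1(\Ii _C(5))\ge 2b-10$ (respectively $h^1(\Ii _C(5))\ge 13$ in the double-line case); this is the exact analogue of Remark \ref{0a00+}.

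\emph{The cohomology chain.} Peeling off successively a hyperplane $H\supset P$, then $P$, then the conic $D$ inside $P$ (using $\Ii _{D,P}\cong \Oo _P(-2)$ and $p_a(D)=0$) one gets, for every $t\ge 1$,
$$
h^1(\Ii _C(t))\ \ge \ h^1(\PP ^4,\Ii _Z(t))\ \ge \ h^1(D,\Ii _{Z,D}(t))\ =\ h^0(D,\Ii _{Z,D}(t))+b-2t-1 ;
$$
so $h^1(\Ii _C(4))\ge b-9\ge 1$, but the decisive input is the lower bound on $h^1(\Ii _C(5))$ from the reduction. Taking a general hyperplane $H$, the set $C\cap H$ consists of $12$ points in uniform position spanning $H$, so no $6$ are collinear and no $10$ lie on a conic; Lemma \ref{c2.1} then gives $h^1(H,\Ii _{C\cap H,H}(t))=0$ for $t=4,5$, and (\ref{nn3}) yields $h^1(\Ii _C(3))\ge h^1(\Ii _C(4))\ge h^1(\Ii _C(5))$. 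To gain an extra $4$ I would apply Lemma \ref{nn1} at $t=5$: if its hypothesis fails, i.e. $h^1(M,\Ii _{C\cap M,M}(5))>0$ for some hyperplane $M$, then by Lemma \ref{c1.1} applied to $C\cap M$ (which we may take curvilinear, choosing $M$ general in the locus of failure so that Lemma \ref{c2.1} applies) either $C$ has $\ge 7$ collinear points, so $C\in \Delta '_7$, excluded; or $C$ carries a conic with contact $\ge 12$, forcing $D$ smooth with $b=12$; or the special curvilinear case of Lemma \ref{c1.1} occurs, again producing a $\ge 6$-secant line, excluded. Hence for $b\le 11$ one gets $h^1(\Ii _C(3))\ge 4+h^1(\Ii _C(5))\ge 14$ (indeed $\ge 16$ if $b=11$ and $\ge 17$ for a double line), while for $b=12$ the inequality $h^1(\Ii _C(3))\ge h^1(\Ii _C(5))\ge 14$ already holds. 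Finally, for general $H$, Lemma \ref{bd2} gives either $h^1(H,\Ii _{C\cap H,H}(3))\le 1$, so that (\ref{nn3}) forces $h^1(\Ii _C(2))\ge h^1(\Ii _C(3))-1\ge 13$ and Lemma \ref{0a1} is contradicted; or $h^1(H,\Ii _{C\cap H,H}(3))=2$ for general $H$, in which case Lemma \ref{bd2}(a) says the general hyperplane section of $C$ lies on a rational normal cubic, and one then checks that $C$ lies on a surface of minimal degree $3$ in $\PP ^4$, which being cut out by quadrics gives $h^0(\Ii _C(2))\ge 3$ and again contradicts Lemma \ref{0a1}. (When $b=11$ or $D$ is a double line the weaker estimate $h^1(H,\Ii _{C\cap H,H}(3))\le 2$ already suffices, so this last dichotomy is only needed for $b\in \{10,12\}$.)

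\emph{Main obstacle.} I expect the delicate point to be the interplay between Lemma \ref{nn1} and the conic: the hypothesis of Lemma \ref{nn1} fails, at $t=5$, along the pencil of hyperplanes through $P$ precisely when $b=12$, and, at $t=4$, along that pencil for every $b\ge 10$; one must therefore verify that each such failure is either absorbed by an already-excluded line configuration or offset by the larger threshold available when $b=12$ or for double lines, and — the tightest step — that the final count genuinely reaches $h^1(\Ii _C(2))\ge 13$ rather than only $12$ when $b=10$, which is where the sharper branch of Lemma \ref{bd2}, together with the reduction of surfaces of minimal degree to Lemma \ref{0a1}, is indispensable. A secondary technical issue is to run the dimension estimate uniformly over the reducible and double-line conics and their various tangency patterns.
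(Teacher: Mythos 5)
Your dimension count is the paper's own (an $11$-dimensional family of conics, $\infty^b$ curvilinear degree-$b$ subschemes on each, and Lemma \ref{m3}, giving codimension $\ge 9$ and the threshold $h^1(\Ii_C(5))\ge 10$ for $b=10$), and the chain $h^1(\Ii_C(4))\ge 4+h^1(\Ii_C(5))$ via Lemmas \ref{nn1} and \ref{c1.1} is also the paper's. (A small simplification you miss: taking the hyperplane spanned by the plane of $D$ and one further point of $C$ shows $\deg(C\cap M)\ge b+1\le 12$, so $b\le 11$ always and your $b=12$ branch is vacuous.) The problem is the endgame. For $b=10$ your chain only reaches $h^1(\Ii_C(3))\ge 14$, and since Lemma \ref{bd2} only guarantees $h^1(H,\Ii_{C\cap H,H}(3))\le 2$, the direct restriction gives $h^1(\Ii_C(2))\ge 12$ — one short of the $13$ needed for Lemma \ref{0a1}. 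You propose to close this by a dichotomy whose hard branch ($h^1(H,\Ii_{C\cap H,H}(3))=2$ for general $H$, i.e.\ the general hyperplane section lies on a rational normal cubic) is resolved by ``one then checks that $C$ lies on a surface of minimal degree $3$.'' That is a genuine lifting statement, it is not checked, and it is not easy: the paper's own architecture shows the authors deliberately avoid it, since the locus $\Delta_1$ of curves whose general hyperplane section lies on a rational normal curve is disposed of by a separate dimension count (Lemmas \ref{co1} and \ref{0a12}), not by lifting. Nor can you quote Lemma \ref{0a12} here, because its proof cites Lemma \ref{0a11} — that would be circular.

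The paper closes the same one-unit gap differently, and this is the idea your proposal is missing: it upgrades the bound to $h^1(\Ii_C(3))\ge 15$, after which the crude estimate $h^1(\Ii_C(2))\ge h^1(\Ii_C(3))-2\ge 13$ suffices with no dichotomy. Concretely, if $h^1(\Ii_C(4))\ge 15$ a general hyperplane already gives $h^1(\Ii_C(3))\ge 15$; if $h^1(\Ii_C(4))=14$, there are only finitely many conics $D_1,\dots,D_s$ with $\deg(D_i\cap C)\ge 10$, one chooses a line $L$ disjoint from all the planes $N_i=\langle D_i\rangle$, notes that every hyperplane $M\supset L$ contains no $N_i$ and hence satisfies $h^1(M,\Ii_{C\cap M,M}(4))=0$ by Lemma \ref{c2.1}, and applies the bilinear lemma to $H^1(\Ii_C(4))^\vee\times H^0(\Ii_L(1))\to H^1(\Ii_C(3))^\vee$ with $\dim H^0(\Ii_L(1))=3$ to get $h^1(\Ii_C(3))\ge 14+2=16$. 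Unless you can actually supply the lifting theorem you invoke, you need an argument of this kind — a restricted pencil or net of hyperplanes avoiding the offending conic planes — to gain the missing unit before restricting to degree $3$.
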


\begin{proof}
A conic is either smooth or reducible or a double line. Lemmas \ref{0a8} and  \ref{0a9} handle the case in which $D$ is not a smooth conic and $\deg (D\cap C)\ge 11$. Assume the existence of a conic $D$ such that $b:= \deg (D\cap C)\ge 10$. Fix any $p\in C\setminus C\cap N$, where $N$ is the plane spanned by $D$, and let $M$ be the hyperplane spanned by $N\cup \{p\}$. Since $\deg (C\cap M) \ge b+1$, we have $b \le 11$. $\PP^4$ contains $\infty ^6$ planes and each plane contains $\infty ^5$ smooth conics
and $\infty ^4$ singular conic. Fix $b\in \{10,11\}$ and a conic $D$. Let $B(D,b)$ be the set of all non-degenerate $C\in M_{12}$ such that $\deg (D\cap C)= b$; if $b=10$ and $D$ is singular assume that $D\cap C$ is curvilinear. Since each conic contains $\infty ^b$ curvilinear subschemes of degree $b$,
Lemma \ref{m3} gives $\dim (B(D,b)) \le 61 -2b$. Varying $D$ we get that the set of all $C$ has codimension at least $9$ in $M_{12}$. Hence it is sufficient to test the curves $C$
with $h^1(\Ii _C(5)) \ge 10$. Since $C\notin \Delta '_7$, we have $h^1(\Ii _C(4)) \ge 14$. Moreover, if $h^1(\Ii _C(4)) =14$, then $\deg (D\cap C) \ge 10$ for finitely many conics $D_1,\dots ,D_s$. Let $N_i$ be the plane spanned by $D_i$. Fix a line $L\subset \PP^4$ such that $L\cap N_i=\emptyset$ for all $i$. Set $V:= H^0(\Ii _L(1))$
and take any $M\in |\Ii _L(1)|$. We have $N_i\nsubseteq M$. Since $M\cap C$ contains no line $R$ with $\deg (R\cap C)\ge 6$ and no conic $D$ with $\deg (D\cap C) \ge 10$,
we have $h^1(M,\Ii _{C\cap M,M}(4)) =0$ (Lemma \ref{c2.1}). Hence the bilinear map $H^1(\Ii _C(4)) ^\vee \times V\to H^1(\Ii _C(3))^\vee$ is non-degenerate in the second variable. By the bilinear lemma
we have $h^1(\Ii _C(3)) \ge h^1(\Ii _C(4)) +\dim (V)-1 = 16$. Hence in all cases we have $h^1(\Ii _C(3)) \ge 15$. By Lemma \ref{bd2} we have $h^1(\Ii _C(2)) \ge 13$, contradicting Lemma \ref{0a1}.
\end{proof}

Let $\Delta _1$ (resp. $\Delta _2$, $\Delta _3$) be the set of all non-degenerate $C\in M_{12}$ such
that for a general hyperplane $H\subset \PP^4$ the set $C\cap H$ is contained in a rational normal curve of $H$ 
(resp., the smooth complete intersection of $2$ quadric surfaces of $H$, resp., a singular integral curve which is 
the complete intersection of 2 quadric surfaces of $H$). 

We have the following estimates:

\begin{lemma}\label{co1}
Every irreducible component of $\Delta _1$ has dimension $\le 49$.
\end{lemma}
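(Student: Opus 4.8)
The plan is to estimate the dimension of $\Delta_1$ by counting parameters for the family of pairs $(C, \Gamma)$ where $\Gamma$ is a rational normal curve of some hyperplane $H$ with $C \cap H \subset \Gamma$. First I would fix a hyperplane $H \subset \PP^4$ and, for a general $C \in \Delta_1$, recover the rational normal curve $\Gamma \subset H$ through the $12$ points $C \cap H$: since $12$ general points on a rational normal cubic of $\PP^3$ determine it uniquely (a rational normal curve of $\PP^3$ has degree $3$, and $3 \cdot 3 + 1 = 10 < 12$ general points force it), the curve $\Gamma$ is uniquely determined by $C$ and $H$, and moreover $C$ will meet $\Gamma$ quasi-transversally at those $12$ points. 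So I would study instead the incidence variety $\{(C,\Gamma): \Gamma \text{ a rational normal curve of some hyperplane}, \ \deg(\Gamma \cap C) \ge 12\}$ and project to the first factor; the fiber being finite (a single point) over a general $C \in \Delta_1$, it suffices to bound the dimension of this incidence variety.

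Next I would count. Rational normal curves $\Gamma$ of degree $3$ spanning a hyperplane of $\PP^4$ form a family of dimension: choosing $H$ costs $4$ parameters, and the rational normal cubics in a fixed $\PP^3$ form a homogeneous space of dimension $\dim \mathrm{PGL}(4) - \dim \mathrm{PGL}(2) = 15 - 3 = 12$, so $\dim = 16$. For a fixed such $\Gamma$, I must count $C \in M_{12}$ with $\deg(\Gamma \cap C) \ge 12$. The scheme $\Gamma \cap C$ is a zero-dimensional scheme of degree $12$ lying on $\Gamma \cong \PP^1$; the space of effective divisors of degree $12$ on $\Gamma$ has dimension $12$, and for each such $Z$ (of degree $12 = d$, so $a = 12 \le d + 1 = 13$), Lemma \ref{m3} gives that the set of $C \in M_{12}$ containing $Z$ has dimension $(r+1)d + r - 3 - (r-1)a = 5 \cdot 12 + 4 - 3 - 3 \cdot 12 = 25$. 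Hence the incidence variety has dimension at most $16 + 12 + 25 = 53$. This overshoots the target $49$ by $4$, so a naive count is not enough.

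The main obstacle — and where the real work lies — is to sharpen the count by $4$. The point is that for $C \in \Delta_1$ the required intersection condition is stronger than "contains the degree-$12$ divisor $Z$ on a fixed $\Gamma$": a general hyperplane section $C \cap H$ is forced to lie on $\Gamma$, which is a codimension condition on how $C$ sits in space, and one should not be free to choose both $\Gamma$ and the $12$ points of $Z$ on it independently of $C$. Concretely, I would fix $\Gamma$ (16 parameters) and observe that if $C \cap H \subset \Gamma$ for the particular hyperplane $H = \langle \Gamma \rangle$, then by semicontinuity this holds for the general hyperplane too, and in fact $C$ must lie on the surface swept out by the lines (or on some low-degree surface containing $\Gamma$); one then bounds the curves $C$ of degree $12$ meeting $\Gamma$ in degree $12$ and lying on such a surface. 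Alternatively — and this is the cleaner route I would pursue — I would not fix the $12$ points: the divisor $Z = \Gamma \cap C$ is not a free parameter because, once $C$ is chosen, $H$ general, it is determined; so I should count $(C, \Gamma)$ with $\Gamma$ varying in its $16$-dimensional family and, for fixed $\Gamma$, count $C$ with $h^0(\Oo_C(1) \otimes \Ii_{Z})$ large, i.e. impose that $C$ actually contains a length-$12$ subscheme of $\Gamma$ but absorb $4$ of those $12$ conditions into the requirement that $C$ be non-degenerate and the hyperplane section land on $\Gamma$ generically. I expect the correct bookkeeping to yield: $16$ (for $\Gamma$) plus the dimension of $\{C \in M_{12} : \deg(C \cap \Gamma) \ge 12\}$, and the latter should be bounded by $33$ rather than $37$, because imposing $\deg(C \cap \Gamma) = 12$ with $C$ of degree $12$ and $\Gamma$ rational normal is much more restrictive than imposing incidence with a fixed generic length-$12$ scheme — one loses an extra $4$ from the constraint that $C$ does not degenerate into $\Gamma$ or into a cone. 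Carrying out this refinement carefully is the crux; the parameter counts above, via Lemma \ref{m3}, are the routine part.
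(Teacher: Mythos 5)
Your parameter count for the incidence variety is essentially the paper's ($16+12+25=53$ when the hyperplane is allowed to vary), but the argument has a genuine gap, and the speculative second half does not close it. The first problem is that the fiber of your incidence variety $\{(C,\Gamma)\}$ over a general $C\in\Delta_1$ is not finite: by the very definition of $\Delta_1$, the section $C\cap H$ lies on a rational normal curve of $H$ for a \emph{general} hyperplane $H$, so over a general $C$ the fiber contains (at least) a $4$-dimensional family of curves $\Gamma$, one for each general hyperplane. Had you used this, your own setup would immediately give $\dim\Delta_1+4\le 53$, i.e.\ the desired bound $49$. The second problem is that the routes you propose for ``sharpening the count by $4$'' are misdirected: for a fixed rational normal cubic $\Gamma$, the locus of $C\in M_{12}$ with $\deg(C\cap\Gamma)\ge 12$ genuinely has dimension up to $12+25=37$ (a degree-$12$ divisor on $\Gamma\cong\PP^1$, then Lemma \ref{m3}), and the assertion that it ``should be bounded by $33$'' is unsupported; no saving of $4$ is available there.

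The paper's proof is the dual formulation of the correct fix: since the condition defining $\Delta_1$ concerns a general hyperplane, one fixes a single general hyperplane $H$ once and for all (projecting the incidence correspondence of pairs $(C,H)$ to the space of hyperplanes shows that the locus of $C$ in a given component whose section by this fixed $H$ lies on a rational normal curve of $H$ still has the full dimension of that component). With $H$ fixed, the rational normal curves form only the $12$-dimensional family inside $H$, and the count is $12+12+25=49$. As written, your argument only establishes $\dim\Delta_1\le 53$.
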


\begin{proof}
Fix a hyperplane $H$, a rational normal curve $D\subset H$ and $S\subset D$ such that $\sharp (S) =12$. By Lemma \ref{m3} the set of all $C\in M_{12}$
containing $S$ has dimension $\le 61-36 $. Since the set of all $S\subset D$ with $\sharp (S) =12$ has dimension $12$ 
and $H$ contains $\infty ^{12}$
rational normal curves, we get the lemma.
\end{proof}

\begin{lemma}\label{co2}
Every irreducible component of $\Delta _2$ has dimension $\le 53$.
\end{lemma}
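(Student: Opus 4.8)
The plan is to imitate the proof of Lemma \ref{co1} almost verbatim, replacing the family of rational normal curves contained in a hyperplane by the family of smooth curves in a hyperplane that are complete intersections of two quadric surfaces. First I would fix, once and for all, a single general hyperplane $H\subset \PP^4$, chosen so that it is transverse to the general member of each of the finitely many irreducible components of $\Delta_2$. By the very definition of $\Delta_2$, for such a general $C\in \Delta_2$ the scheme $S:=C\cap H$ is a set of $12$ distinct points lying on some smooth curve $T\subset H$ which is the complete intersection of two quadric surfaces of $H$, and of course $S\subset C$. Hence $\Delta_2$ is contained in the union, taken over all such $T\subset H$ and over all $12$-point subsets $S\subset T$, of the families $\{C\in M_{12}: S\subset C\}$, and it suffices to bound the dimension of this union.

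Next I would carry out the three dimension counts that enter. By Lemma \ref{m3} one has $\dim M_{12}=5\cdot 12+4-3=61$, and since $12\le 12+1$ the same lemma applies to $Z=S$ and gives $\dim\{C\in M_{12}:S\subset C\}\le 61-3\cdot 12=25$ for a fixed $S$ with $\sharp(S)=12$. The set of all $12$-point subsets of a fixed curve $T$ has dimension $12$. Finally, the smooth complete intersections of two quadric surfaces inside $H=\PP^3$ form a family of dimension at most $16$: each such curve is the base locus of a pencil of quadrics, and the pencils of quadric surfaces in $\PP^3$ are parametrized by $G(2,H^0(\Oo_{\PP^3}(2)))=G(2,10)$, which has dimension $16$. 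Adding these three contributions, $\dim\Delta_2\le 25+12+16=53$. Exactly as in Lemma \ref{co1}, no further summand for the choice of $H$ appears, since a single general hyperplane suffices; the only change with respect to the bound $49$ for $\Delta_1$ is the replacement of the $\infty^{12}$ twisted cubics by the $\infty^{16}$ quartic curves.

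The only step that is not entirely automatic is the bound $16$ on the family of the relevant quartics in $\PP^3$; the crude count of pencils already suffices for an upper bound, but if one prefers the exact value one notes that a smooth complete intersection elliptic quartic $T\subset\PP^3$ lies on exactly a pencil of quadrics, i.e. $h^0(\Ii_T(2))=2$, which follows from Riemann--Roch on $T$ (giving $h^0(\Oo_T(2))=8$) together with the projective normality of $T$ (giving surjectivity of $H^0(\Oo_{\PP^3}(2))\to H^0(\Oo_T(2))$). One should also record, before invoking Lemma \ref{m3}, that for a general hyperplane $H$ the scheme $C\cap H$ is reduced of length $12$, so that the hypothesis $a\le d+1$ is met; this is the uniform position principle for the general hyperplane section of an integral non-degenerate curve. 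I expect neither point to present a genuine difficulty, so the heart of the argument is the dimension count above.
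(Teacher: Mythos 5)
Your argument is correct and is essentially the paper's own proof: the authors likewise fix a single general hyperplane $H$, observe that the smooth complete intersections of two quadrics in $H$ form a $16$-dimensional family, and conclude exactly as in Lemma~\ref{co1} by adding $25+12+16=53$. Your extra justifications (the bound $16$ via pencils of quadrics, and the reducedness of $C\cap H$ needed for Lemma~\ref{m3}) are sound and merely make explicit what the paper leaves implicit.
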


\begin{proof}
Fix a hyperplane $H$. The set of all degree $4$ smooth elliptic curves of $H$ has dimension $16$ and we may conclude as in the proof of Lemma \ref{co1}.
\end{proof}

\begin{lemma}\label{co3}
Every irreducible component of $\Delta _3$ has dimension $\le 52$.
\end{lemma}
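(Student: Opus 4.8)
The plan is to argue exactly as in Lemmas \ref{co1} and \ref{co2}. Fix, once and for all, a general hyperplane $H\cong\PP^3$. By definition of $\Delta_3$, for a general $C$ in any irreducible component of $\Delta_3$ the set $S:=C\cap H$ — which consists of $12$ distinct points spanning $H$, as $C$ is smooth and non-degenerate and $H$ is general — is contained in a singular integral curve $T\subset H$ which is the complete intersection of two quadric surfaces. Hence $\dim\Delta_3$ is bounded by (the dimension of the family of such curves $T$) plus (the dimension $12$ of the choice of a $12$-point subset $S\subset T$) plus (the dimension of the set of $C\in M_{12}$ through a fixed $S$), the last number being $61-3\cdot 12=25$ by Lemma \ref{m3} applied with $r=4$, $d=12$, $a=12\le d+1$. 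Thus the whole proof comes down to showing that the singular integral complete intersections of two quadric surfaces in a fixed $\PP^3$ form a family of dimension at most $15$; granting this, $\dim\Delta_3\le 15+12+25=52$.

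To bound that family I would recall the classical description of complete intersections of two quadric surfaces in $\PP^3$: they form an irreducible $16$-dimensional family, the Grassmannian $G(2,H^0(\Oo_{\PP^3}(2)))=G(2,10)$ of pencils of quadrics (of dimension $2\cdot 8=16$) dominating it by a generically finite map, since a general pencil cuts out a smooth elliptic quartic curve $C$ which recovers its pencil as $h^0(\Ii_C(2))=2$. As the generic member of this family is smooth, the singular members form a proper closed subset, hence of dimension $\le 15$, and a fortiori the integral singular ones do too. (If one wants the codimension to be exactly $1$: the singular members are cut out in $G(2,10)$ by the vanishing of the discriminant of the binary quartic $(\lambda,\mu)\mapsto\det(\lambda Q_1+\mu Q_2)$, i.e. by the condition that two of the four cone-members of the pencil collide, or that some member drop to rank $\le 2$.)

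I do not expect a real obstacle: the only substantive ingredient is the $16$-dimensionality (with smooth generic member) of the family of complete intersections of two quadrics in $\PP^3$, which is standard, and the remaining steps duplicate the bookkeeping of Lemmas \ref{co1} and \ref{co2}. The one routine precaution, already implicit in those proofs, is to take $H$ general enough to work simultaneously for the finitely many irreducible components of $\Delta_3$, and general enough that $C\cap H$ is reduced and spanning for a general $C$ in each component; this causes no difficulty.
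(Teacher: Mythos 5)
Your proposal is correct and follows essentially the same route as the paper: fix a general hyperplane $H$, bound the family of singular integral complete intersections of two quadrics in $H$ by $15$, add $12$ for the choice of $S\subset T$ and $25$ for the curves through $S$ via Lemma \ref{m3}. The paper simply asserts the bound $15$, whereas you justify it via the pencil locus in $G(2,10)$; that is a welcome elaboration, not a divergence.
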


\begin{proof}
Fix a hyperplane $H$. The set of all singular, integral and non-degenere curves $D\subset H$ with $\deg (D\cap H) =4$, i.e. the set of all singular integral curves which are the complete intersection of 2 quadric surfaces of $H$, has dimension $15$. Now we argue as in the proof of Lemma \ref{co1}.
\end{proof}

\begin{lemma}\label{0a12}
A general $C\in M_{12}$ contains only finitely many elements of $\Delta _1 \cup \Delta _2 \cup \Delta _3$.
\end{lemma}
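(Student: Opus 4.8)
The plan is to dispose of $\Delta_1$, $\Delta_2$, $\Delta_3$ simultaneously, by combining the dimension estimates of Lemmas \ref{co1}, \ref{co2}, \ref{co3} with the cohomological descent already used in Lemmas \ref{0a8}--\ref{0a11}. Fix $i\in\{1,2,3\}$ and an irreducible component $\Gamma$ of $\Delta_i$; by Lemmas \ref{co1}, \ref{co2}, \ref{co3} we have $\dim\Gamma\le 49$, $53$, $52$ respectively. By (\ref{one}) and (\ref{two}) it is enough to test the curves $C\in\Gamma$ with $h^1(\Ii_C(5))\ge 62-\dim\Gamma$, that is $h^1(\Ii_C(5))\ge 13$ if $i=1$, $\ge 9$ if $i=2$, $\ge 10$ if $i=3$; and for each such $C$ it suffices to show $h^0(\Ii_C(2))\ge 3$, since then no smooth quintic threefold contains $C$ by Lemma \ref{0a1}. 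Moreover, by Lemmas \ref{0a8}, \ref{0a9} and \ref{0a11} a general $W\in\Ww$ already contains only finitely many curves lying in $\Delta '_7\cup\Delta_6$ or meeting some conic in a scheme of length $\ge 10$, so we may assume $C\notin\Delta '_7\cup\Delta_6$ and $\deg(D\cap C)\le 9$ for every conic $D$; in particular, since $C$ is smooth, $\deg(L\cap C)\le 5$ for every line $L$.

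The first step is to prove that $h^1(M,\Ii_{C\cap M,M}(t))=0$ for \emph{every} hyperplane $M\subset\PP^4$ and $t=4,5$. Set $Z:=C\cap M$. Since $C$ is non-degenerate, $C\not\subset M$, so $Z$ has degree $12$, and $Z$ is curvilinear, being a subscheme of the smooth curve $C$. Also $Z$ spans $M\cong\PP^3$: if $Z$ were contained in a plane $N\subset M$ then $Z=C\cap N$, which is impossible, because two linearly independent linear forms restrict to $C\cong\PP^1$ as non-proportional forms of degree $12$, so their common zero scheme has length $\le 11<12$. Finally, for every line $R\subset M$ and every conic $D\subset M$ we have $R\cap Z=R\cap C$ and $D\cap Z=D\cap C$, whence $\deg(R\cap Z)\le 5$ and $\deg(D\cap Z)\le 9$. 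As $12\le 3t$ for $t=4,5$, Lemma \ref{c2.1} applied to $Z\subset M$ yields the claimed vanishing.

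With this in hand, Lemma \ref{nn1} (with $r=4$) applies at $t=5$ and then at $t=4$ — the positivity $h^1(\Ii_C(5))>0$ holds by the choice of $C$ — giving $h^1(\Ii_C(4))\ge 4+h^1(\Ii_C(5))$ and $h^1(\Ii_C(3))\ge 4+h^1(\Ii_C(4))\ge 8+h^1(\Ii_C(5))$. For a general hyperplane $H$ the set $C\cap H$ is in uniform position, so $h^1(H,\Ii_{C\cap H,H}(3))\le 2$ by Lemma \ref{bd2}, and the exact sequence (\ref{nn3}) with $t=3$ gives $h^1(\Ii_C(2))\ge h^1(\Ii_C(3))-2\ge 6+h^1(\Ii_C(5))$. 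Substituting the three lower bounds on $h^1(\Ii_C(5))$ produces $h^1(\Ii_C(2))\ge 19$, $15$, $16$ in the cases $i=1,2,3$, so in every case $h^0(\Ii_C(2))=h^1(\Ii_C(2))-10\ge 3$, as required.

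I expect the delicate point to be the second paragraph. Lemma \ref{nn1} needs the vanishing of $h^1$ of the hyperplane section for \emph{all} hyperplanes, not merely a general one; this is why one must invoke the curvilinear statement Lemma \ref{c2.1} (so as to kill the ``third case'' in Lemma \ref{c1.1}) and check, uniformly in $M$, that $C\cap M$ is curvilinear and spans $M$, after having removed the line and conic loci in Lemmas \ref{0a8}, \ref{0a9}, \ref{0a11}. The remaining subtlety is purely numerical: one must confirm in each of the three cases that the dimension bound forces $h^1(\Ii_C(5))$ to be large enough to carry $h^1(\Ii_C(2))$ past the threshold $13$ coming from Lemma \ref{0a1}, which the numbers $19$, $15$, $16$ above confirm with a small margin.
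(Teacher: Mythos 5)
Your proof is correct and follows essentially the same route as the paper: the dimension bounds of Lemmas \ref{co1}--\ref{co3} force $h^1(\Ii_C(5))\ge 9$ (you keep the three cases separate, the paper just takes the worst bound), then Lemmas \ref{nn1} and \ref{c1.1}/\ref{c2.1} at $t=4,5$ together with Lemma \ref{bd2} at $t=3$ yield $h^0(\Ii_C(2))\ge 3$, contradicting Lemma \ref{0a1}. Your explicit verification that $C\cap M$ is curvilinear and spans $M$ for \emph{every} hyperplane $M$, so that Lemma \ref{c2.1} disposes of the third alternative of Lemma \ref{c1.1}, is a point the paper leaves implicit.
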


\begin{proof}
By Lemmas \ref{co1},  \ref{co2} and \ref{co3}, we may assume that $h^1(\Ii _C(5)) \ge 9$. 
By Lemmas \ref{0a8}, \ref{0a9} and \ref{0a11} we may assume $\deg (C\cap L)\le 5$ for all lines
and $\deg (D\cap C)\le 9$ for all conics. By Lemmas \ref{nn1} and \ref{c1.1} for $t=4,5$ we get $h^1(\Ii _C(3)) \ge 4+h^1(\Ii _C(4))\ge 8+h^1(\Ii _C(5)) \ge 17$.
Lemma \ref{bd2} gives $h^1(\Ii _C(2)) \ge 15$, i.e. $h^0(\Ii _C(2)) \ge 5$, contradicting Lemma \ref{0a1}.
\end{proof}

By Lemmas \ref{0a8}, \ref{0a9} and \ref{0a11} to prove Theorem \ref{clemens} for non-degenerate $C\in M_{12}$ it is sufficient to test the ones such that $\deg (C\cap D)\le 9$ for any conic $D$ and $\deg (L\cap C)\le 5$ for any line $L$. 
By the cases $t=4,5$ of Lemmas \ref{nn1} and \ref{c1.1} we have
$h^1(\Ii _C(3)) \ge 4+h^1(\Ii _C(4)) \ge 8+h^1(\Ii _C(5))$. 

By Lemmas \ref{bd2} and \ref{0a12} we may assume $h^1(H,\Ii _{C\cap H,H}(3))=0$. 
Now the case $t=3$ of the exact sequence (\ref{nn3})
gives 
\begin{equation}\label{chain}
h^1(\Ii _C(2)) \ge h^1(\Ii _C(3)) \ge 4+h^1(\Ii _C(4)) \ge 8+h^1(\Ii _C(5)).
\end{equation}
 
Since the stratum in $M_{12}$ corresponding to curves with $h^1(\Ii _C(5)) > 0$ has codimension $2$ 
(as in \cite[pp. 901--902]{da}), by (\ref{one}) and (\ref{two})  we may assume $h^1(\Ii _C(5)) \ge 3$, 
hence  $h^1(\Ii _C(2)) \ge 11$. Since $h^0(\Oo _{\PP^4}(2)) =15$ and $h^0(\Oo _C(2)) =25$, 
we get $h^0(\Ii _C(2)) \ge 1$. 

Now, if $h^1(\Ii _C(5)) \le 5$ (hence  $h^0(\Ii _C(5)) \le 70$) we conclude by the following Lemma \ref{fff1}. 

\begin{lemma}\label{fff1}
Let $\Gamma$ be any irreducible family of non-degenerate curves of $M_d$, $d >1$, contained in some quadric hypersurface. Then $\dim \Gamma \le 14+3d$.
\end{lemma}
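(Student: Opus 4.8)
The plan is to separate the curve from a quadric containing it. Since $h^0(\Oo_{\PP^4}(2))=15$, the quadric hypersurfaces of $\PP^4$ form a $\PP^{14}$, and we let $I\subset\Gamma\times\PP^{14}$ be the incidence variety $\{(C,Q):C\subset Q\}$. Its first projection to $\Gamma$ is surjective, because every $C\in\Gamma$ lies on a quadric, so $\dim I\ge\dim\Gamma$; its second projection to $\PP^{14}$ has all fibres contained in the family of degree $d$ smooth rational curves lying on the corresponding quadric, so $\dim I\le 14+m$, where $m$ is the maximum, over all quadrics $Q\subset\PP^4$, of the dimension of the family of degree $d$ smooth rational curves contained in $Q$. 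Hence it suffices to prove $m\le 3d$. Moreover every member of $\Gamma$ is an irreducible non-degenerate curve, and such a curve cannot be contained in a quadric of rank $\le 2$ (a union of at most two hyperplanes, or a double hyperplane), so we may take the maximum only over quadrics of rank $3$, $4$ or $5$.

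For $Q$ of rank $5$, i.e.\ a smooth quadric threefold, $Q$ is a homogeneous space, hence convex, so $h^1(\PP^1,f^*T_Q)=0$ for every morphism $f\colon\PP^1\to Q$. For $C\cong\PP^1$ smooth in $Q$, combining this with $h^1(\PP^1,T_{\PP^1})=0$ in the sequence $0\to T_{\PP^1}\to f^*T_Q\to N_{C/Q}\to 0$ gives $h^1(N_{C/Q})=0$; so $[C]$ is a smooth point of the Hilbert scheme of $Q$ and the component through it has dimension $\chi(N_{C/Q})$. As $\det N_{C/Q}\cong\omega_C\otimes(\omega_Q^{-1})|_C$ and $\omega_Q\cong\Oo_Q(-3)$, we have $\deg N_{C/Q}=-2+3d$, hence $\chi(N_{C/Q})=(3d-2)+2=3d$, as needed (with equality).

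For $Q$ of rank $3$ or $4$, $Q$ is the cone, with vertex a linear subspace $V$, over a smooth quadric $Y$ with $\langle Y\rangle\cap V=\emptyset$: here $Y=Q_2\subset\PP^3$ and $V$ is a point if $\mathrm{rk}\,Q=4$, while $Y$ is a smooth conic in $\PP^2$ and $V$ is a line if $\mathrm{rk}\,Q=3$; in either case $Y\cong\PP^1$ or $Y\cong\PP^1\times\PP^1$, and $Q\setminus V$ is the total space over $Y$ of $\Oo_Y(1)^{\oplus(\dim V+1)}$. For a degree $d$ smooth rational curve $C\subset Q$ not meeting $V$, projection from $V$ is a morphism $\phi\colon\PP^1\to Y$, non-constant onto a non-degenerate curve of $Y$ (since $C$ is non-degenerate and $C\not\subseteq V$), and $C$ is recovered from $\phi$ together with a lifting, i.e.\ a section in $H^0(\PP^1,\phi^*(\Oo_Y(1)^{\oplus(\dim V+1)}))$. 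Writing everything in suitable coordinates (so that $Y$ is parametrized by a Veronese embedding of $\PP^1$ or of $\PP^1\times\PP^1$) and counting the parameters for $\phi$ and for the lifting, then subtracting the redundancies from $\mathrm{Aut}(\PP^1)$ and from the scalings, gives dimension exactly $3d$. Finally, if $C$ meets $V$ then $C\cap V$ is finite ($C$ being irreducible and non-degenerate while $V$ is degenerate), and the same count, after absorbing a common factor of degree $\deg(C\cap V)$ into the coordinates vanishing on $V$ — equivalently, working with the strict transform of $C$ on the blow-up $\widetilde Q=\mathrm{Bl}_V Q$, which is a $\PP^1$-bundle over $\PP^1\times\PP^1$ if $\mathrm{rk}\,Q=4$ and a $\PP^2$-bundle over $\PP^1$ if $\mathrm{rk}\,Q=3$ — gives strictly smaller dimension. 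This proves $m\le 3d$ for cones, completing the proof.

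The bookkeeping is routine, and the step I expect to be the main obstacle is the cone case: making the parameter count in $Q\setminus V$ precise, and checking that curves through the vertex do not enlarge the family. (The rank $5$ case shows the bound $3d$ is sharp, hence cannot be lowered without discarding quadrics of maximal rank.)
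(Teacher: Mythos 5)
Your proposal is correct in outline and, for quadrics of maximal rank, coincides with the paper's argument (normal bundle of degree $3d-2$ with vanishing $h^1$ by homogeneity, hence a $3d$-dimensional Hilbert scheme). Where you genuinely diverge is in the treatment of the quadric cones, and the comparison is instructive. The paper does \emph{not} try to prove the uniform bound $3d$ on every quadric: it observes that singular quadrics form only a $13$-dimensional family inside $\PP^{14}$, so on a fixed integral singular quadric it suffices to prove the weaker bound $3d+1$, and this it gets by a soft deformation-theoretic computation on the blow-up $\widetilde{Q}\to Q$ of the vertex ($h^1(N_{D,\widetilde{Q}})=0$ by homogeneity of $\widetilde{Q}\setminus E$, plus $\deg\omega_{\widetilde{Q}}|_D=-3d+cx\ge -3d-1$). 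You instead insist on $m\le 3d$ for cones as well, via the explicit description of $Q\setminus V$ as the total space of $\Oo_Y(1)^{\oplus(\dim V+1)}$ and a parameter count for the projection $\phi$ and its lift. That count does work: for curves avoiding $V$ one gets $(2d+2)+(d+1)-3=3d$ in rank $4$ and $(d+1)+(2d+2)-3=3d$ in rank $3$ (the latter only when $d$ is even, which is why curves meeting the vertex line are not a negligible boundary stratum), and for curves meeting $V$ one gets $3d-1$ in rank $4$ but exactly $3d$ again in rank $3$ --- so your parenthetical claim that the vertex case gives \emph{strictly} smaller dimension is false for rank $3$, though harmless since $\le 3d$ is all you need. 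The trade-off is clear: your route is more elementary and gives a sharper statement on each cone, but it forces you to carry out the delicate bookkeeping (common factors, scaling redundancies, the parity issue in rank $3$) that you yourself flag as the main obstacle; the paper's route sacrifices one in the per-quadric bound, buys it back from the codimension of the discriminant in $\PP^{14}$, and thereby replaces the whole parameter count by a one-line adjunction computation on $\widetilde{Q}$.
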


\begin{proof}
Since $\dim |\Oo _{\PP^4}(2)| =14$ and singular quadrics occur in codimension $1$, it is sufficient to prove that 
for every smooth (resp., integral but singular) quadric $Q$ the set $\Gamma'$ of all $C\in M_d$ contained in $Q$
has dimension $\le 3d$ (resp., $\le 3d+1$). 

First assume that either $Q$ is smooth or $C$ does not intersect the singular locus $V$ of $Q$.
In this case the normal sheaf $N_{C,Q}$ is a rank $2$ spanned vector bundle on $C$, hence $h^1(N_{C,Q})=0$. Since $\det (N_{C,Q})$ has degree $3d-2$ and $N_C$ has rank $2$,
Riemann-Roch gives $h^0(N_{C,Q})=3d$, proving the lemma in this case. 

Now assume $C\cap V\ne \emptyset$ and set $x:= \deg (C\cap V)$. Since $C$ is smooth, $x=1$ if $\dim (V)=0$. Let $\tau _Q$ denote the tangent sheaf
of $Q$. The vector space $H^0(\tau _Q)$ is the tangent space at the identity map of the automorphism group $\mathrm{Aut}(Q)$. Since $Q\setminus V$
is homogeneous, $\tau _Q|(Q\setminus V)$ is a spanned vector bundle. Since $C$ is not a line
and $\dim V\le 1$, the set $V\cap C$ is finite. Dualizing the natural map from the conormal sheaf of $C$ in $Q$ to $\Omega _Q^1$ we
get a map $u: \tau _Q|C \to N_{C,Q}$ which is surjective outside the finite set $C\setminus C\cap V$. Since $C$ is smooth and rational
and $\tau _Q$ is spanned at each point of $Q\setminus V$, we get $h^1(N_{C,Q})=0$. Since we need to prove that $\dim \Gamma ' \le 3d+1$, it is sufficient to check this inequality when $C$ is a general
element of $\Gamma '$. In particular we may assume that $\deg (C'\cap V) =x$ for a general $C'\in \Gamma '$ and use induction on the integer $x$, the case $x=0$ being true
by the case $C\cap V=\emptyset$ proved before. Set $\Gamma '':= \{C'\in \Gamma ': \deg (V\cap C) =x\}$. It is sufficient to prove that $\dim \Gamma '' \le 3d+1$. 
Let $v: \widetilde{Q} \to Q$ be the blowing up of $V$, $E:= v^{-1}(V)$ the exceptional divisor, and $\widetilde{C} \subset \widetilde{Q}$ the strict transform of $C$. Since $C$ is smooth, $v$ maps isomorphically
$\widetilde{C}$ onto $C$ and the numerical class of $\widetilde{C}$ with respect to $\mathrm{Pic}(\widetilde{Q})$ only depends on $\dim (V)$, $d$ and $x$. Let $\Psi$ be closure in $\mathrm{Hilb}(\widetilde{Q})$ of the strict transforms of all $C'\in \Gamma ''$. It is sufficient to prove
that $\dim \Psi \le 3d+1$. Take a general $D\in \Psi$. Since $\mathrm {Aut}(\widetilde{Q})$ acts transitively of $\widetilde{Q}\setminus E$, the first part of the proof
gives $h^1(N_{D,\widetilde{Q}}) =0$. Hence it is sufficient to prove that $\deg (N_{D,\widetilde{Q}}) \le 3d-1$, i.e. $\deg ({\tau _{\widetilde{Q}}}_{|D}) \le 3d+1$, i.e. $\deg (\omega _{\widetilde{Q}}|D) \ge -3d-1$. 
The group $\mathrm{Pic}(\widetilde{Q})$ is freely generated by $E$ and the pull-back $H$ of $\Oo _Q(1)$. We have $D\cdot H = d$ and $D\cdot E =x$.
We have $\omega _{\widetilde{Q}}\cong \Oo _{\widetilde{Q}}(-3H+cE)$ with $c = -1$ if $\dim(V)=0$ (see for instance 
\cite{jpr}, Example 8.5 (2)) and  $c = 0$ if $\dim(V)=1$ (see for instance \cite{jpr}, Example 8.5 (3)).
Hence $\deg ({\omega _{\widetilde{Q}}}_{|D}) = -3d+cx \ge -3d-1$ and the proof is complete.
\end{proof}

If instead $h^1(\Ii _C(5)) \ge 6$, then by (\ref{chain}) we have  $h^1(\Ii _C(2)) \ge 14$, i.e. $h^0(\Ii _C(2)) \ge 4$,
contradicting  Lemma \ref{0a1}.

\section{Degenerate case}\label{deg}

The degenerate case occurs in codimension $10$ of $M_{12}$. Indeed, the general curve of degree $d=12$ in $\PP^3$ 
has maximal rank (\cite{be2}), in particular it does not sit on any quintic. It follows that our codimension is $\dim(M_d)-(4d-1+4)=61-51=10$. Hence we may assume  $h^1(\Ii _C(5)) \ge 11$.

We consider degenerate curves $C\in M_{12}$ with $h^1(\Ii _C(5)) \ge 11$ contained in a hyperplane $M$ and in a general quintic $W$ with $W':= M\cap W$. 

\begin{lemma}\label{deg1}
Let $W \subset \PP^4$ be a general quintic hypersurface. Then $W$ contains finitely many integral curves $T$ of degree $4$ which are the complete intersection
of a hyperplane and $2$ quadric hypersurfaces and all of them are smooth.
\end{lemma}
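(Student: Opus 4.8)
The plan is to show that the family of degree-4 curves $T$ on a general quintic $W$ which arise as complete intersections of a hyperplane with two quadrics is, first of all, parametrized by a space of the expected dimension, and that this dimension is too small for $W$ to contain infinitely many such curves once we combine it with the incidence-correspondence count of~(\ref{one}) and~(\ref{two}). Such a $T$ is the complete intersection of a hyperplane $M\cong\PP^3$ and two quadric surfaces in $M$; equivalently $T$ is a degenerate quartic curve of arithmetic genus $1$. Let $\Jj$ denote the family of all such curves $T$ in $\PP^4$. First I would compute $\dim\Jj$: hyperplanes vary in a $4$-dimensional family, and inside a fixed $M\cong\PP^3$ the pencils of quadrics with integral base locus form an open subset of the Grassmannian $G(2,10)$, of dimension $16$, but the curve $T$ only depends on the pencil, so $\dim\Jj=4+16=20$. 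Now $T$ has degree $4$, so $h^0(\Oo_T(5))=21$ (since $p_a(T)=1$, $\chi(\Oo_T(5))=20-1+1=20$; but more carefully, $h^0(\Oo_T(5))=\deg+1-g+h^1$, and for these curves one checks $h^1(\Oo_T(5))=0$, giving $h^0(\Oo_T(5))=20$). Thus for a general $T\in\Jj$ one has $h^1(\Ii_T(5))=0$, hence $h^0(\Ii_T(5))-1 = 125 - 20 = 105$, and $\dim\Jj + (h^0(\Ii_T(5))-1) = 20+105 = 125$, so the second projection of the incidence correspondence $\{(T,W): T\subset W\}$ dominates $\PP^{125}$ generically finitely; therefore a general $W$ contains only finitely many such $T$.

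The remaining assertion is that every such $T$ on a \emph{general} $W$ is smooth. For this I would argue as follows. A curve $T$ which is the complete intersection of a hyperplane and two quadric surfaces in $\PP^3$ fails to be smooth only for $T$ in a proper closed subset $\Jj'\subset\Jj$ (the locus of singular integral complete intersections of two quadric surfaces in a $\PP^3$ has dimension $15$ inside each $M$, as already recorded in the proof of Lemma~\ref{co3}, so $\dim\Jj'=4+15=19<20=\dim\Jj$). Hence the sublocus of $\PP^{125}$ consisting of quintics containing some singular such $T$ has dimension at most $\dim\Jj' + (h^0(\Ii_T(5))-1) \le 19 + 105 = 124 < 125$, so a general $W$ contains no singular $T$ of this type. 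Combining the two parts: a general $W$ contains only finitely many integral degree-$4$ curves that are complete intersections of a hyperplane and two quadric hypersurfaces, and all of them are smooth.

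The main obstacle, and the point requiring genuine care rather than a dimension count, is establishing the vanishing $h^1(\Ii_T(5))=0$ (equivalently $h^1(\Oo_T(5))=0$ together with surjectivity of $H^0(\Oo_{\PP^4}(5))\to H^0(\Oo_T(5))$) for the general member of $\Jj$, since the whole argument hinges on the dimension of the second fibre being exactly $\dim\Jj + (h^0(\Ii_T(5))-1) \le 125$ with equality only in the generic case. This I would handle by the standard degeneration/specialization argument: it suffices to exhibit one curve $T_0\in\Jj$ (even a reducible nodal one, provided it lies in the closure and is a flat limit of integral members) with $h^1(\Ii_{T_0}(5))=0$; for instance, one may take $T_0$ to be a general hyperplane section of an elliptic quartic curve, or a nodal degeneration thereof, whose postulation in $\PP^4$ is easily checked directly, and then semicontinuity of $h^1$ propagates the vanishing to the general $T$. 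A secondary point is to verify that the general pencil of quadrics in a fixed $\PP^3$ has integral base locus — this is classical (a general pencil of quadric surfaces has smooth irreducible base curve, an elliptic quartic) — so that $\Jj$ is indeed irreducible of the stated dimension and the strata $\Jj'\subset\Jj$ are proper.
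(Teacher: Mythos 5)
Your argument is correct, and the finiteness part is essentially the paper's: both proofs rest on the dimension count $\dim\Jj+(h^0(\Ii_T(5))-1)=20+105=125$, with the vanishing $h^1(\Ii_T(5))=0$ coming from the fact that $T$ is a complete intersection. (On that point you over-complicate matters: a complete intersection is arithmetically Cohen--Macaulay, so $h^1(\Ii_T(t))=0$ for \emph{every} $t$ and every member of $\Jj$, singular or not; no degeneration or semicontinuity argument is needed, and indeed you want the vanishing for all members, not just general ones, so that every fibre of $\pi_1$ has dimension exactly $105$.) Where you genuinely diverge from the paper is the smoothness assertion: the paper disposes of the singular members in one line by quoting Cotterill's result that a general quintic contains no singular rational curves of degree $\le 11$ (a singular integral quartic of arithmetic genus $1$ is rational), and then only counts dimensions for the smooth elliptic quartics. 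You instead run a second, self-contained dimension count on the $19$-dimensional stratum $\Jj'$ of singular members, getting $19+105=124<125$. Your route costs one extra (easy) dimension estimate, which you can in fact borrow from the proof of Lemma~\ref{co3}, and buys independence from the deep input of \cite{c2}; the paper's route is shorter but leans on machinery it is already assuming elsewhere. One small remark in your favour: your value $\dim\Jj=4+16=20$ is the correct dimension of the family of elliptic quartics in $\PP^4$ (equivalently $\chi(N_{T/\PP^4})=\deg\bigl(\Oo_T(2)^{\oplus 2}\oplus\Oo_T(1)\bigr)=20$), and it is exactly the value needed for the count to close at $125$.
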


\begin{proof}
Since $W$ contains no singular rational curves (\cite{c2}), it is sufficient to consider the smooth ones, i.e. the degree $4$ elliptic curves of $\PP^4$. Let $\Gamma '$ be the set of all 
degree $4$ elliptic curves of $\PP^4$. Fix $T\in \Gamma '$. Since $N_T \cong \Oo _T(2)^{\oplus 2} \oplus \Oo _T(1)$, we have $h^1(N_T)=0$, hence $\Gamma'$ is smooth and of dimension $\chi (N_T) = 16$. Since $T$ is a complete intersection, we have
$h^1(\Ii _T(5)) = 0$ and $h^0(\Ii _T(5)) = \binom{9}{4} -20$. Hence a dimension count gives the lemma.
\end{proof}

\begin{lemma}\label{non1}
$W'$ contains only finitely many non-degenerate curves of degree $5$ and $6$.
\end{lemma}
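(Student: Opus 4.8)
The plan is to combine the adjunction formula on the quintic surface $W'=W\cap M$ with the Castelnuovo bound for space curves, reducing the statement to a finiteness assertion for numerical classes of negative self‑intersection, and then to dispose by a separate argument of the one class that can carry a positive‑dimensional family of curves.

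For general $W$ the surface $W'$ is smooth with $\omega _{W'}\cong \Oo _{W'}(1)$ (and the finitely many singular points occurring for special $M$ do not affect the count), so for every irreducible curve $T\subset W'$ of degree $e$ the adjunction formula reads $T^2=2p_a(T)-2-e$. If $T$ is non‑degenerate in $M\cong \PP^3$ and $e\in\{5,6\}$, the Castelnuovo bound for non‑degenerate integral curves of $\PP^3$ gives $p_a(T)\le 2$ when $e=5$ and $p_a(T)\le 4$ when $e=6$; hence $T^2\le -3<0$ for $e=5$, while for $e=6$ we get $T^2\le 0$, with equality only if $p_a(T)=4$, in which case $T$ is an extremal (in particular quadric‑contained) curve, i.e.\ a complete intersection of a quadric and a cubic surface in $M$. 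A reducible or non‑reduced curve of degree $5$ or $6$ is, as a divisor on $W'$, a sum of (possibly repeated) integral curves of degree $\le 5$ --- lines, conics, twisted cubics, and rational or elliptic quartics --- each of which has negative self‑intersection on $W'$ by the same computation; so it suffices to bound the integral curves and then count the finitely many ways of combining the rigid pieces.

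Next I would use that an irreducible curve $T$ with $T^2<0$ on the smooth surface $W'$ is the unique effective divisor in its numerical class, i.e.\ $h^0(\Oo _{W'}(T))=1$. The classes $x\in \mathrm{NS}(W')$ with $0<x\cdot \Oo _{W'}(1)\le 6$ and with $x^2$ in the range forced above lie in a bounded region of $\mathrm{NS}(W')_{\mathbb R}$, hence form a finite subset of the N\'eron--Severi lattice, and each of them supports at most one curve. This already yields finiteness of all degree $5$ and degree $6$ curves on $W'$, with the single exception of curves in a class $x$ with $x\cdot \Oo _{W'}(1)=6$ and $x^2=0$, realized by the $(2,3)$‑complete‑intersection curves.

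This last case is the main obstacle. For such a class, $\chi (\Oo _{W'})=5$ and $x\cdot \omega _{W'}=6$ give $h^0(\Oo _{W'}(x))\ge 2$ by Riemann--Roch (while $h^2=h^0(\Oo_{W'}(\Oo_{W'}(1)-x))=0$ since $(\Oo_{W'}(1)-x)\cdot\Oo_{W'}(1)=-1$), so such a curve moves in a base‑point‑free pencil and $W'$ is fibred over $\PP^1$ by genus $4$ sextics; the lemma therefore reduces to showing that $W'=W\cap M$ admits no such fibration. I would prove this by a dimension count in $\PP^{125}$: the degenerate $(2,3)$‑complete‑intersection curves $C\subset \PP^4$ form a $28$‑dimensional family, and for each of them the hyperplane sequence $0\to \Oo _{\PP^4}(4)\to \Ii _C(5)\to \Ii _{C,\langle C\rangle}(5)\to 0$ together with the Koszul resolution of $C$ in $\langle C\rangle\cong \PP^3$ gives $h^0(\Ii _C(5))=70+29=99$; since any quintic surface containing one such curve contains the whole pencil, the quintic threefolds having a hyperplane section fibred in this way fill only a proper closed subvariety of $\PP^{125}$, which the general $W$ avoids. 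Equivalently one may invoke the Noether--Lefschetz fact that for general $W$ and general hyperplane $\mathrm{Pic}(W')=\ZZ\,\Oo _{W'}(1)$, so that $W'$ then carries no degree $6$ curve and no non‑degenerate degree $5$ curve at all; the genuine difficulty is precisely to check that the $4$‑dimensional family of hyperplane sections of a general quintic threefold --- and in particular the hyperplane spanned by the degenerate degree $12$ curve under consideration --- does not meet the Noether--Lefschetz locus in the bad components, which is where the extra incidence with that degree $12$ curve (carrying $h^1(\Ii _C(5))\ge 11$) would be fed into the count.
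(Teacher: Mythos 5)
Your route is genuinely different from the paper's. The paper proves the stronger statement that a general quintic threefold $W$ contains only finitely many integral non-degenerate curves $D$ of degree $t\in\{5,6\}$ lying in hyperplanes, by a direct incidence count: by Gruson--Lazarsfeld--Peskine $h^1(M,\Ii_{D,M}(5))=0$, so the quintics through $D$ have codimension $5t+1-q$ in $\PP^{125}$, while the family of such $D$ has dimension $4t+4$, and the Castelnuovo bound on $q=p_a(D)$ makes $4t+4\le 5t+1-q$. Your intrinsic argument on $W'$ via adjunction, the Hodge index theorem and the rigidity of irreducible classes of negative square is sound for integral non-degenerate curves on a smooth $W'$, and it correctly isolates the one dangerous class, the genus-$4$ sextic with $T^2=0$. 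Two caveats, though. First, the lemma must be read as a statement about integral curves (otherwise it is false: a plane quintic section of $W'$ has $T^2=5$ and moves in a $3$-dimensional family, so unions of a plane quintic with a line would already give infinitely many non-degenerate degree-$6$ divisors); your reducible/non-reduced paragraph, whose claim that every integral component of degree $\le 5$ has negative self-intersection fails for plane quartics ($T^2=0$) and plane quintics, should simply be deleted. Second, $M$ is a special hyperplane, so $W'$ may well be singular, and the intersection-theoretic part has to be run on the minimal resolution; this is fixable (the singularities are rational double points) but not automatic.

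The genuine gap is the $(2,3)$-sextic case, and your own numbers expose it: the family of such curves in $\PP^4$ has dimension $28$ and $h^0(\Ii_D(5))=99$, so the incidence variety has dimension $28+98=126>125$. Your parenthetical remark that any quintic containing one member contains the whole pencil only forces the fibres of the projection to $\PP^{125}$ to be at least $1$-dimensional, hence bounds the image by $125$ --- which does not make it a \emph{proper} closed subvariety; and the Noether--Lefschetz statement you invoke holds for a general hyperplane section, not for the special $M$ spanned by $C$. You acknowledge the difficulty but do not close it. It can be closed by liaison on the quadric: the unique quadric surface $Q\subset M$ containing $D$ meets $W$ in a curve of bidegree $(5,5)$ containing the $(3,3)$-curve $D$, whose residual is a degree-$4$ curve of arithmetic genus $1$; the count of Lemma \ref{deg1} ($16+105=121<125$ for the smooth elliptic quartics, and similar counts for the degenerate ones, which are built from the finitely many lines and conics of $W$) shows a general $W$ contains no such curve, hence no $(2,3)$-sextic at all. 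It is worth noting that your analysis puts a finger on a delicate point in the paper as well: the proof of Lemma \ref{non1} uses $q\le 3$ with equality for the quadric-cubic complete intersection, whereas that curve has $p_a=4$ (consistently with $\omega_D\cong\Oo_D(1)$ as stated there), and with $q=4$ the inequality $4t+4\le 5t+1-q$ reads $28\le 27$ and fails; so the supplementary liaison argument is needed in either approach.
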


\begin{proof}
Fix a degree $t\in \{5,6\}$ integral and non-degenerate curve $D\subset W'$ and set $q:= p_a(D)$.  By \cite{glp} we have $h^1(M,\Ii _{D,M}(5)) =0$, hence $h^0(\PP^4,\Ii _D(5)) =126-5t-1+q$. 

First assume $t=5$. By the genus bound for space curves we have $q\le 2$. Since $q\le 2$, we have
$h^1(\Oo _D(1)) =0$ and in particular $h^1(\Oo _D(5)) =0$, i.e. $h^0(\Oo _D(5)) =5t+1-q$. Since $q\le 2$, all the irreducible components
of the Hilbert scheme of $M$ containing $D$ have dimension $20$. Since $\PP^4$ has $\infty ^4$ hyperplanes, it is sufficient to use that $4t +4
\le 5t+1-q$.

Now assume $t=6$. By the genus bound for space curves (\cite[Theorems 3.7 and 3.13]{he}), $q\le 3$ and $q=3$ if and only if $D$ is contained
in a quadric surface $Q$. Assume $q=3$. In this case $D$ is the complete intersection of $Q$ and a cubic surface (\cite[Corollary 3.14]{he})
and so $D$ is a locally complete intersection, $\omega _D \cong \Oo _D(1)$ and $N_{D,M} \cong \Oo _D(2)\oplus \Oo _D(3)$. Since $h^1(N_D)=0$
the Hilbert scheme of $M$ at $D$ is smooth and of dimension $4t$. We conclude as in the case $t=5$.
The case $q\le 2$ is done as in the case $t=5$.
\end{proof}

A theorem of Zak (see for instance \cite{z}) states that the Gauss map of any smooth projective variety is finite, hence $W'$ has only finitely many singular points, all of them being hypersurface singularities. 
By \cite[p. 733]{msv} $W'$ has only rational double points of type $A_i$, $i\le 4$, and $D_4$ as singularities.

We may improve the lower bound $h^1(\Ii _C(5)) \ge 11$ if we restrict the set of hyperplanes or rather if we restrict the pairs $(W,M)\in |\Oo _{\PP^4}(5)|\times |\Oo _{\PP^4}(1)|$.

\begin{remark}\label{wo1}
If $M$ is tangent to $W$, i.e. if $W'$ is singular, then we may assume $h^1(\Ii _C(5)) \ge 12$. Since the Gauss map is birational, if $W'$ has at least two singular
points, then we may assume $h^1(\Ii _C(5)) \ge 13$.
\end{remark}

\begin{remark}\label{wo2}
For any line $L\subset \PP^4$ we have $h^0(\Ii _L(1)) =3$. A general $W$ contains only finitely many lines (\cite{c2}). Hence if $W'$ contains a line, then we may assume
$h^1(\Ii _C(5)) \ge 13$. Since any two lines of $W$ are disjoint (\cite{c2}), any two lines of $W$ span a hyperplane.
Hence if $W'$ contains two different lines, then we may assume $h^1(\Ii _C(5)) \ge 15$. Fix a line $L\subset W$. For any $p\in L$, the hyperplane $T_pW$
is the only hyperplane singular at $p$. Since $\dim (L) =1$, we get that if $W'$ is singular at one point of $L$, then we may assume $h^1(\Ii _C(5)) \ge 14$.
\end{remark}

\begin{remark}\label{wo3}
For any smooth conic $D\subset \PP^4$ we have $h^0(\Ii _D(1)) =2$. A general $W$ contains only finitely many conics (\cite{c2}). Hence if $W'$ contains a smooth conic, then we may assume
$h^1(\Ii _C(5)) \ge 14$.
\end{remark}

\begin{remark}\label{wo4}
For any integer $x$ with $3\le x\le 11$, $W$ contains only finitely many smooth rational curve of degree $x$, none
of them contained in a plane. Hence if $W'$ contains a smooth rational curve
of degree $x$, then we may assume $h^1(\Ii _C(5)) \ge 15$. The same is true if $W'$ contains a line and a conic or $2$ conics.
\end{remark}

For any hyperplane $U$ let $M_{12}(U)$ denote the set of all $C\in M_{12}$ contained in $U$. 
The locus $M_{12}(U)$ is smooth and irreducible and $\dim (M_{12}(U)) =48$.

\begin{remark}\label{wo5}
Fix an integer $e>0$ and assume the existence of a line $L\subset W'$ such that $\deg (L\cap C)=e$. Let $\Jj (e)$ be the set of all quadruples $(W,H,L,C)$ with $W\in \Ww$, $H$
a hyperplane, $L\subset W\cap U$ a line, $C\in M_{12}(U)$ and $\deg (L\cap C)=e$. Fix any $(W,H,L,C)\in \Jj (e)$. We have $\Jj (e) =\emptyset$ if $e\ge 12$. Now assume $e\le 11$.
Fix a line $L\subset M$ and a degree $e$ zero-dimensional
scheme $Z\subset U$ with $\deg (Z) =e$ and take any $C\in M_{12}(U)$ such that $Z\subset C$. As in Lemma \ref{m3} we see that $h^1(N_{C,M}(-Z)) =0$, hence the set of
all $C\in M_{12}(U)$ with $Z\subset C$ has dimension $48-2e$. Varying $Z$ in $L$ we see that the set of all $C\in M_{12}(U)$ the set of all $C\in M_{12}(U)$ such that
$\deg (C\cap L) =e$ has dimension $\le 48-e$.
Since each $W\in \Ww$ contains only finitely many lines, to show that for all $(W,M,L,C)\in \Jj (e)$ we have $C \nsubseteq W$ it is sufficient to exclude the ones with $h^1(\Ii _C(5)) \ge 13+e$. 
\end{remark}

Since the Gauss map of the smooth projective variety $W$ is finite, $W'$ has only finitely many singular points. Since $W'$ is locally a complete intersection, $W'$ is normal. By \cite{c2} $W$ has only finitely many lines
and only finitely many conics and no singular rational curve of degree $\le 11$. 
By Lemma \ref{deg1} $W$ has only finitely many smooth elliptic curves of degree $4$.

\begin{remark}\label{aaa1}
Let $W \subset \PP^4$ be a general quintic. Let $\Dd _i$, $i\ge 1$, be the set of all irreducible plane curves of degree $i$ contained in $W$. Since $W$ contains no plane, we have $\Dd _i =\emptyset$ for all $i\ge 6$,
and the set $\Dd _5$ is formed by the irreducible degree $5$ curves of the form $W\cap N$ with $N\subset \PP^4$ a plane. Hence $\Dd _5$ is irreducible and of dimension $8$.
By \cite{c2},  $\Dd _1\cup \Dd _2$ is finite and any two elements of it are disjoint. Fix $D\in \Dd _3$ and let $N \subset \PP^4$ be the plane spanned by $D$. The plane curve $W\cap N$ is the union of 
some $D\in \Dd _3$ together with a smooth conic, a reducible conic, or a double line. Since $\Dd _2$ is finite, the first case may occur only for finitely many planes and these are exactly the planes $N$ 
such that $W\cap N = T_2\cup T_3$ with $T_2\in \Dd _2$ and $T_3\in \Dd _3$. The second case does not occur, because the lines of $W$ are disjoint. Now assume
that $W\cap N = D\cup 2L$ with $L$ a line. By Zak's tangency theorem the restriction to $L$
of the Gauss map of $W$ is finite. Therefore the third case occurs only for at most one plane $N\supset L$
Now take $T\in \Dd _4$ and let $N$ be the plane spanned by $T$. We have $N\cap W = T\cup R$ with $R\in \Dd _1$, hence all elements of $\Dd _4$ are obtained
in the following way. 
Since the set of all planes of $\PP^4$ containing a line is a $2$-dimensional projective space, each irreducible component of $\Dd _4$ has dimension $2$.
 Fix any $L\in \Dd _1$ and take the intersection with $W$ of the element of the net of all planes of $\PP^4$ containing $L$. For a fixed hyperplane $M\subset L$
the set of all planes containing $L$ and contained in $M$ has dimension $1$.
\end{remark}

Let $\alpha$ be the minimal degree of a surface of $M$ containing $C$. Since $C$ is irreducible, every degree $\alpha$ surface containing $C$ is irreducible.

\begin{lemma}\label{wo6}
We have $\alpha >3$.
\end{lemma}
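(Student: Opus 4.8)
The plan is to exclude $\alpha=2$ and $\alpha=3$ separately.

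A smooth rational curve of degree $12$ cannot be planar (its genus would be $\binom{11}{2}\ne 0$), so $\alpha\ge 2$. If $\alpha=2$, then $C$ lies on an integral quadric surface $F\subset M$; since $\deg (F)=2<5$ and a general quintic $W$ contains no quadric surface (a standard parameter count gives dimension $<125$ for the incidence of quintics and quadric surfaces), $F$ is not a component of $W'=M\cap W$, so $F\cap W'$ is a curve of degree $10$ containing the irreducible degree $12$ curve $C$ — absurd. Hence $\alpha\ge 3$.

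Now assume $\alpha=3$ and let $F\subset M$ be a cubic surface through $C$. As noted before the statement $F$ is integral, and it is moreover the only cubic surface through $C$: two distinct ones would meet along a surface of degree $\le 2$, one component of which contains the irreducible curve $C$, forcing $C$ into a plane or a quadric surface of $M$. Tensoring the defining sequence of $C\subset M\subset \PP^4$ by $\Oo_{\PP^4}(2)$ gives
\[
0\to \Oo_{\PP^4}(1)\to \Ii_C(2)\to \Ii_{C,M}(2)\to 0 ,
\]
and since $\alpha>2$ forces $h^0(\Ii_{C,M}(2))=0$ we get $h^0(\Ii_C(2))=5$, hence $h^1(\Ii_C(2))=h^0(\Ii_C(2))+h^0(\Oo_C(2))-h^0(\Oo_{\PP^4}(2))=5+25-15=15$. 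So it suffices to contradict this by producing $h^1(\Ii_C(2))\ge 16$.

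First I bound the relevant locus. The triples $(M,F,C)$ with $M$ a hyperplane, $F\subset M$ a cubic surface and $C\subset F$ a smooth rational curve of degree $12$ form a family of dimension $\le 4+19+11=34$: for a smooth rational curve $C$ lying on $F$ away from $\Sing (F)$ one has (then $C$ is Cartier on $F$, and by adjunction $p_a(C)=0$, $K_F\cdot C=-12$) $N_{C/F}=\Oo_C(C)$ of degree $C^2=10$, so $h^1(N_{C/F})=0$ and the Hilbert scheme of such curves on a fixed $F$ has dimension $h^0(N_{C/F})=11$; singular cubic surfaces, cones over plane cubics, and curves meeting $\Sing (F)$ only lower the count, and there is ample slack. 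Thus the degenerate $C\in M_{12}$ with $\alpha=3$ form a locus of codimension $\ge 27$, and by (\ref{one})--(\ref{two}) it is enough to test those with $h^1(\Ii_C(5))\ge 28$. Next, for a general plane $N\subset M$ the set $S:=C\cap N$ consists of $12$ points in uniform position spanning $N$ and lying on the cubic curve $F\cap N$; since $\alpha>2$ the set $S$ lies on no conic, and (a uniform-position set spanning $N$ has no $3$ collinear points and at most $5$ on a conic) \cite[Corollaire 2]{ep}, applied with $s=2$ and $s=3$, forces $\max\{n:h^1(\Ii_{S,N}(n))>0\}\le 4$, with equality only when $S$ is a complete intersection of a cubic and a quartic; in every case $h^1(\Ii_{S,N}(5))=0$, $h^1(\Ii_{S,N}(4))\le 1$, and (the cubic through $S$ being unique for general $N$) $h^1(\Ii_{S,N}(3))=3$. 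Finally, the exceptional planes — those $N$ with $h^1(N,\Ii_{C\cap N,N}(5))\ne 0$ — all contain a line of $F$ (by Bézout a line meeting $C$ in $\ge 4$ points lies on $F$, and a conic containing $C\cap N$ forces a line of $F$ into $F\cap N$), hence form a $1$-dimensional locus; choosing a line $L\subset M$ so that the pencil $|\Ii_{L,M}(1)|$ avoids them and running the bilinear lemma as in Lemma \ref{nn1} with the $2$-dimensional space $V=H^0(\Ii_{L,M}(1))$, then the sequence (\ref{nn3}) at $t=4,3$ (where $H^2(\Ii_C(t-1))=0$ makes the maps on $H^1$ surjective), we get
\[
h^1(\Ii_C(4))\ge h^1(\Ii_C(5))+1,\qquad h^1(\Ii_C(3))\ge h^1(\Ii_C(4))-1,\qquad h^1(\Ii_C(2))\ge h^1(\Ii_C(3))-3 ,
\]
so that $h^1(\Ii_C(2))\ge h^1(\Ii_C(5))-3\ge 25>16$, the contradiction we want.

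The main obstacle is the last step: to push the Castelnuovo chain one needs $h^1(N,\Ii_{C\cap N,N}(t))$ under control for enough planes $N$ (all of them, if one wants the full force of Lemma \ref{nn1}), and this forces the identification of the exceptional planes with those containing a line of $F$. That identification rests on Bézout against the degree-$3$ surface $F$ together with $F$ having only finitely many lines, so the case in which $F$ is a cone over a plane cubic must be split off and handled by a direct projection argument, exactly as in the cone sub-case of the proof of Lemma \ref{0a1}. A secondary point is to make the dimension bound $\le 34$ fully rigorous when $F$ is singular or $C$ meets $\Sing (F)$; this is routine, and in any case the final inequality $25>16$ leaves a wide margin.
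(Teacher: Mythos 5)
Your exclusion of $\alpha=2$ is essentially the paper's (intersect a quadric through $C$ with the irreducible quintic surface $W'$ to get a degree $10$ curve containing the degree $12$ curve $C$). For $\alpha=3$ you take a genuinely different route: the paper stays with the actual pair $(C,W')$, uses liaison inside $S\cap W'$ together with the irreducibility of $W'$ to get $h^1(M,\Ii_{C,M}(5))\ge 16$ unconditionally, kills $h^1(N,\Ii_{C\cap N,N}(4))$ by Strano's theorem, and contradicts the uniqueness of the cubic surface at level $3$; you instead discard $\{\alpha=3\}$ by a dimension count unless $h^1(\Ii_C(5))\ge 28$ and then descend all the way to level $2$, contradicting the exact value $h^1(\Ii_C(2))=15$. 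The skeleton of your descent (bilinear lemma on a pencil of planes avoiding the exceptional ones, then $h^1(N,\Ii_{C\cap N,N}(4))\le 1$ and $h^1(N,\Ii_{C\cap N,N}(3))=3$ for a general plane) is sound, and the target $h^1(\Ii_C(2))=15$ is correctly computed from $h^0(M,\Ii_{C,M}(2))=0$.

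The genuine gap is the dimension bound $\le 34$, and specifically the claim that singular cubic surfaces ``only lower the count.'' That is false. An integral non-normal cubic surface $F\subset M$ is singular along a line and its normalization is the cubic scroll $F_1$; the linear system $|h+11f|$ on $F_1$ has dimension $22$, its general member is a smooth rational curve of degree $12$ with respect to $\nu^*\Oo_F(1)=\Oo_{F_1}(h+2f)$, and it maps isomorphically onto a smooth rational degree $12$ curve on $F$. So the fibre of your incidence variety over such an $F$ has dimension $22$, not $\le 11$, and the total climbs to roughly $4+13+22=39$, well above $34$. Your argument is not dead --- a bound of $41$ still lets you assume $h^1(\Ii_C(5))\ge 21$, whence $h^1(\Ii_C(2))\ge 18>15$ --- but the true margin is about $2$ (the chain fails once $\dim\Gamma>43$), not ``ample,'' and the cases you call routine (cones over singular plane cubics, normal cubics with $C$ through the singular points) each need an explicit bound of the same quality; none is supplied. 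The same ruled surfaces also break your identification of the exceptional planes, since they carry infinitely many lines and you defer only the cone case: one must check separately that the ruling lines meet $C$ in at most $2$ points, so that only the pencil of planes through the double line is exceptional. Finally, ``since $\alpha>2$ the set $C\cap N$ lies on no conic'' is not immediate from the definition of $\alpha$: you need the standard lifting statement that an integral curve in $\PP^3$ whose general plane section lies on a conic lies on a quadric surface. Each of these points is repairable, but as written the proof asserts a false bound at exactly the step that fixes all of the subsequent numerology.
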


\begin{proof}
Since $C$ spans $M$, we have $\alpha >1$. Assume $\alpha =2$ and take $Q\in |\Ii _{C,M}(2)|$. Since $W'$ is irreducible, $W'\cap Q$ is a degree $10$ curve
containing $C$, a contradiction.
Now assume $\alpha =3$. Since $\deg ({C}) >9$, $C$ is contained in a unique cubic surface $S$. Let $J\subset S\cap W'$ be the locally Cohen-Macaulay curve
linked to $C$ by the complete intersection $S\cap W'$. We have $\deg (J) =3$ and $p_a(J) =-18$ (\cite[Proposition 3.1]{ps}).
Since $\deg (J) < -p_a(J)$, $J$ has a multiple component. Since $\deg (J)=3$, the multiple component is a line, $L$.
Since $|\Ii _{C,M}(5)|$ contains all quintic surfaces $S\cup Q$ with $Q\in |\Oo _M(2)|$ and $W'$ is irreducible, we have
$h^0(M,\Ii _{C,M}(5)) \ge 11$, i.e. $h^1(M,\Ii _{C,M}(5)) \ge 16$. Assume for the moment the non-existence of a line $R\subset M$ with $\deg (R\cap C)\ge 7$.
By Lemmas \ref{nn1} and \ref{c1.1} we get $h^1(M,\Ii _{C,M}(4)) \ge 19$. Fix a general plane $N\subset M$. We have an exact sequence
\begin{equation}\label{eqwo1}
0 \to \Ii _{C,M}(t-1) \to \Ii _{C,M}(t)\to \Ii _{C\cap N,N}(t)\to 0
\end{equation}
Since $N$ is general, the plane cubic $C\cap N$ is irreducible and $C\cap N$ is formed by $12$ points of the smooth locus of $C\cap N$.
Hence $h^1(N,\Ii _{C\cap N,N}(4)) \le 1$ with equality if and only if $C\cap N$ is the complete intersection of $S\cap N$ with a plane quartic. Since $h^0(M,\Ii _{C,M}(2)) =0$
and (by the genus formula) $C$ is not a complete intersection of two surfaces, \cite[Theorem 6]{st} gives
$h^1(N,\Ii _{C\cap N,N}(4)) =0$. The case $t=4$ of (\ref{eqwo1}) gives $h^1(M,\Ii _{C, M}(3)) \ge 19$, i.e. $h^0(M,\Ii _{C,M}(3)) \ge 2$, a contradiction.
Now assume the existence of a line $R\subset M$ such that $e:= \deg (R\cap C)\ge 7$. There are at most finitely many such $R$, because they cannot be all the lines of a ruling of $S$. Take a line $L\subset M$ disjoint from all $R$. Set $V:= H^0(M,\Ii _{L,M}(1))$. Take any plane $U\subset M$ containing $L$. Since $\deg (K\cap C)\le 6$ for each line $K\subset U$, we have $h^1(U,\Ii _{C\cap U,U}(5)) =0$. Hence the bilinear map $H^1(M,\Ii _{C,M}(5)) ^\vee \times V\to H^1(\Ii _{C,M}(4)) ^\vee$ is non-degenerate. Since $\dim (V)=2$, the bilinear lemma
gives $h^1(M,\Ii _{C,M}(4)) \ge h^1(M,\Ii _{C,M}(5)) +2-1$. Since $e>5$, Bezout gives $R\subset W'$. By Remark \ref{wo5} we may assume $h^1(M,\Ii _{C,M}(5)) \ge 20$. Since we just proved that $h^1(N,\Ii _{C\cap N,N}(4)) =0$, we get $h^1(M,\Ii _{C,M}(3)) \ge 21$, hence the contradiction $h^0(M,\Ii _{C,M}(3)) \ge 3$.
\end{proof}

\begin{lemma}\label{ww0}
$W'$ contains no $C\in M_{12}(M)$ such that $h^0(M,\Ii _{C,M}(4)) \ge 3$ and no $C\in M_{12}(M)$ with a line $L\subset M$ with $\deg (L\cap C)\ge 7$.
\end{lemma}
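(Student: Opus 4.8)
The plan is to contradict Lemma \ref{wo6} in both cases. Since $\alpha>3$ gives $h^0(M,\Ii_{C,M}(3))=0$, the exact sequence $0\to\Ii_{C,M}(3)\to\Oo_M(3)\to\Oo_C(3)\to0$ (with $h^0(\Oo_M(3))=20$, $h^0(\Oo_C(3))=37$, $h^1(\Oo_C(3))=0$) forces $h^1(M,\Ii_{C,M}(3))=17$; so in each case I will manufacture $h^1(M,\Ii_{C,M}(3))\ge 18$.

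For the first claim, the degree-$4$ version of that sequence ($h^0(\Oo_M(4))=35$, $h^0(\Oo_C(4))=49$, $h^1(\Oo_C(4))=0$) shows that $h^0(M,\Ii_{C,M}(4))\ge 3$ is equivalent to $h^1(M,\Ii_{C,M}(4))\ge 17$. If the inequality is strict ($h^1\ge 18$) it already contradicts the above via the restriction sequence (\ref{eqwo1}) with $t=4$ and a general plane $N\subset M$ for which $h^1(N,\Ii_{C\cap N,N}(4))=0$ — such $N$ exists by \cite[Theorem 6]{st}, because $h^0(M,\Ii_{C,M}(2))=0$ and, by the genus formula, $C$ is not a complete intersection of two surfaces. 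In the tight case $h^1(M,\Ii_{C,M}(4))=17$ I run the bilinear lemma as in Lemma \ref{nn1}, inside $M\cong\PP^3$, over a pencil of planes: choose a line $L'\subset M$ such that every $U\in|\Ii_{L',M}(1)|$ is ``good'', i.e. $h^1(U,\Ii_{C\cap U,U}(4))=0$. By the plane statement preceding Lemma \ref{c1.1} (or \cite[Lemma 34]{bgi}), a plane $U$ fails to be good only if it carries a line $R$ with $\deg(R\cap C)\ge 6$, or a conic $D$ with $\deg(D\cap C)\ge 10$, or a plane cubic through $C\cap U$; any such $R$ and any irreducible such $D$ lies on $W'$ by Bezout, hence is one of finitely many by \cite{c2}, the reducible offending conics span at most a $1$-dimensional family of planes, and the plane-cubic case — a proper condition on planes by \cite[Theorem 6]{st} — cannot fill a $2$-dimensional family without a cubic surface through $C$, against $\alpha>3$; so a generic $L'$ does the job. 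The bilinear lemma over $H^0(M,\Ii_{L',M}(1))$ (dimension $2$) then yields $h^1(M,\Ii_{C,M}(3))\ge h^1(M,\Ii_{C,M}(4))+1=18$.

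For the second claim, let $e:=\deg(L\cap C)\ge 7$. Since $e>5=\deg W'$, Bezout forces $L\subset W'\subset W$, so $(W,M,L,C)\in\Jj(e)$ and, by Remark \ref{wo5}, it suffices to treat $h^1(\Ii_C(5))\ge 13+e\ge 20$; as $C$ spans $M$, the sequence $0\to\Oo_{\PP^4}(4)\to\Ii_{C,\PP^4}(5)\to\Ii_{C,M}(5)\to0$ (first map multiplication by the equation of $M$, last term pushed forward from $M$) gives $h^1(M,\Ii_{C,M}(5))=h^1(\PP^4,\Ii_C(5))\ge 20$. I repeat the dodging argument one degree higher: the $(\ge 7)$-secant lines of $C$ lie on $W'$ and so are finitely many, and the planes $U$ in which all $12$ points of $C\cap U$ sit on a conic cannot form a $2$-dimensional family (else $C$ lies on a quadric, against $\alpha>3$), so for a generic line $L''\subset M$ every $U\in|\Ii_{L'',M}(1)|$ has $h^1(U,\Ii_{C\cap U,U}(5))=0$; the bilinear lemma gives $h^1(M,\Ii_{C,M}(4))\ge 21$, and clearing the $t=4$ obstructions as above (or via (\ref{eqwo1}) with $t=4$ and \cite[Theorem 6]{st}) we reach $h^1(M,\Ii_{C,M}(3))\ge 21>17$.

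The cohomological descent is routine; the work is in the geometry of the bad planes. The hard part is to confirm that the whole locus of bad planes sits in finitely many points and curves of $(\PP^3)^{*}$, so that a pencil of planes through a generic line misses it entirely — the subcases that Bezout places on $W'$ are finite by \cite{c2}, but the configuration of two coplanar $5$-secant lines (and, in the $t=5$ step, $12$ points of $C\cap U$ on a conic) is not forced onto $W'$, and ruling out a $2$-dimensional family of such planes is where $\alpha>3$, or failing that a dimension count in the style of Lemma \ref{m3} together with (\ref{one})--(\ref{two}), must be brought in.
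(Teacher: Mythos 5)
Your strategy is genuinely different from the paper's: you try to push the cohomology down from degree $4$ (resp.\ $5$) to degree $3$ via the restriction sequence and the bilinear lemma, forcing $h^0(M,\Ii_{C,M}(3))\ge 1$ against Lemma \ref{wo6}, whereas the paper argues by liaison. (It takes two general quartics through $C$, links $C$ to a degree-$4$ curve $J$ with $p_a(J)=-16$, and analyzes the multiple and movable components of $J$; one residual subcase produces a $7$-secant line, which is then killed in part (b) by combining Remark \ref{wo5} with the descent already carried out in Lemma \ref{wo6}, yielding $h^0(M,\Ii_{C,M}(4))\ge 4$, which part (a) has shown impossible.) Your arithmetic is correct ($h^0(M,\Ii_{C,M}(4))\ge 3 \Leftrightarrow h^1(M,\Ii_{C,M}(4))\ge 17$, and $h^1(M,\Ii_{C,M}(3))\ge 18 \Rightarrow \alpha\le 3$), so everything hinges on the ``dodging'' step, and that is where the proof does not close.

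The concrete gap is the claim that the bad planes for the $t=4$ descent form a locus of dimension $\le 1$ in the dual $\PP^3$. By \cite[Corollaire 2]{ep} a plane $U$ is bad when $C\cap U$ has a degree-$10$ subscheme on a conic, and here Bezout gives you nothing: for a conic $D\not\subset W'$ one only gets $\deg(D\cap W')\le 10$, so $\deg(D\cap C)=10$ is perfectly compatible with $D\not\subset W'$. Thus neither the irreducible $10$-secant conics nor the reducible ones (two coplanar $5$-secant lines, say) are forced onto $W'$, and the finiteness from \cite{c2} that you invoke simply does not apply to them. Since $C$ is by hypothesis a highly special curve ($h^1(\Ii_C(5))\ge 11$), you cannot appeal to genericity to bound the family of its $10$-secant conics, and a priori the planes they span could sweep out a surface in $(\PP^3)^\vee$ --- in which case \emph{every} pencil of planes meets the bad locus and the bilinear lemma is unavailable. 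The same difficulty recurs at your $t=4$ step in the second claim. You flag this yourself in your closing paragraph, but flagging it is not filling it: the assertion that the reducible offending conics ``span at most a $1$-dimensional family of planes'' is unsupported, and $\alpha>3$ does not obviously exclude a two-dimensional family of planes in which only $10$ of the $12$ points of $C\cap U$ lie on a conic (Laudal/Strano-type lifting needs the \emph{whole} section on the conic). This is precisely the obstruction the paper's liaison argument is designed to bypass, so as it stands the proposal is incomplete at its load-bearing step.
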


\begin{proof} The statement is made of two parts. 

\quad (a) Take general $S_1,S_2\in |\Ii _{C,M}(4)|$ and take a  general $(S_1,S_2)\in |\Ii _{C,M}(4)|^2$. Since $\alpha >3$, $S_i$ is irreducible.
The complete intersection $S_1\cap S_2$ links $C$ to a degree $4$ curve $J$ with $p_a(J) = -16$ (\cite[Proposition 3.1]{ps}), hence $J$ has at least one multiple
component, say $B$ with multiplicity $c\ge 2$. Since $h^0(M,\Ii _{C,M}(4)) \ge 3$, $J$ has also a movable component $A$. Hence $B$ is a line and either $A$ is a line or
it is a smooth conic. 

First assume that $A$ is a smooth conic, $c=2$, and $J$ has no other component. We have $p_a(C\cup B) =\deg (C\cap B) -1 \le 10$
and $p_a(A\cup B) = \deg (A\cap B)-1 \ge -1$. Since
$A\cup B$ is linked to $C\cup A$ by the complete intersection $S_1\cap S_2$, we have $p_a(A\cup B) = p_a(C\cup B)  -20 \le -10$, a contradiction.

Now assume $\deg (A)=1$. Moving $S_2$ we get that $S_1$ is ruled by lines. Since $\deg (S_1)>2$, $S_1$ has a unique ruling. This case cannot occur if $h^0(M,\Ii _{C,M}(4)) \ge 4$, because the plane is the only surface with $\infty ^2$ lines.

First assume that $c=2$. In this case
$J$ contains a line $R\notin \{B,A\}$. We have $p_a(C\cup B ) \le 10$, $p_a(B\cup A\cup R) \ge -2$, while \cite[Proposition 3.1]{ps} gives $p_a(B\cup A\cup R) = p_a(C\cup B)-20$,
a contradiction.
Now assume $c=3$. $C$, $A$ and $B$ are the unique components of $S_1\cap S_2$. $S_i$ and $S_2$ do not contain $B$ in their singular locus, because $S_1\cap S_2$ would contain $B$ with multiplicity $2$. Since the line
$B$ is not a line of the ruling of $S_1$, $S_1$ is not a cone, it is rational and it is a linear projection from a minimal degree surface $S\subset \PP^5$ (neither the Veronese
surface not a cone). $S$ is a Hirzebruch surface, either $F_0 \cong \PP^1\times \PP^1$ embedded by the complete linear system $|\Oo _{F_0}(h+2f)|$
or $F_2$ embedded by the complete linear system $|h+3f|$. $S_1$ is not a linear projection of $F_0$, because it has a line, $B$, not in the ruling and not in the singular locus
(i.e. the image of a conic of $F_0$). Hence $S_1$ is a linear projection of $F_0$. Any smooth rational curve $C_1\subset F_0$ with $C_1$ not a line
is an element of $|h+xf|$ for some $x\ge 3$. We have $\deg (C_1) = (h+xf)\cdot (h+3f) =x$, hence if $C_1$ has $C$ as its projection, then $x=12$. $B$ is the image
of $h$. We have $\deg (h\cap C_1) = 10$ and so $\deg (C\cap B) \ge 7$. Hence to prove the lemma it is sufficient to prove
the second assertion.

\quad (b) Take $C\in M_{12}(M)$ with a line $L\subset M$ with $\deg (L\cap C)\ge 7$. By part (a) to get a contradiction it is
sufficient to prove that $h^0(M,\Ii _{C,M}(4)) \ge 4$. Bezout gives $ R\subset W'$. By Remark \ref{wo5} we may
assume $h^1(M,\Ii _{C,M}(5)) \ge 19$. As in the proof of Lemma \ref{wo6} we get $h^1(M,\Ii _{C,M}(4)) \ge 20$, i.e. $h^0(M,\Ii _{C,M}(4)) \ge 6$.
\end{proof}

\begin{remark}\label{ww01}
Take $C\in M_{12}(M)$ without lines $L$ with $\deg (L\cap C)\ge 7$. By Lemma \ref{nn1} to prove that $h^0(M,\Ii _{C,M}(4)) \ge 3$ (hence to prove
that $C\nsubseteq W'$ by Lemma \ref{ww0}) it is sufficient to prove that $h^1(\Ii _C(5)) \ge 14$. By Remarks \ref{wo2}, \ref{wo3} and \ref{wo4} this is always the case if $W'$ contains
a smooth rational curve of degree $\ge 2$ or if it contains two lines. So from now on we assume that $W'$ has no such curves, 
hence no smooth elliptic curve
of degree $3$ by Remark \ref{aaa1}. We also assume that $W'$ has no smooth elliptic curve of degree $4$ by Lemma \ref{deg1}.
\end{remark}

Now we are going to apply all of the dimension-counting remarks and lemmas above and to use liaison in order to show that degenerate rational curves which are sufficiently generic 
(with respect to the properties described in the remarks and lemmas) must in fact have $h^1(\Ii _C(5)) < 11$, contradiction. Our argument hinges on a careful case-by-case analysis 
involving the types of divisors that that arise as components of certain residuals $C_T$ to $C$ inside of complete intersections of type $(5,5)$.

Since $h^1(\Ii _C(5)) \ge 11$ we have $h^0(M,\Ii _C(5)) \ge 6$. Hence $h^0(W',\Ii _{C,W'}(5)) \ge 5$. 
For any $T\in |\Ii _{C,M}(5))|$ with $T\ne W'$ let $C_T\subset T\cap W'$ denote the curve
linked to $C$ by the complete intersection $T\cap W'$. 

We have $\deg (C_T) =13$, $\chi (\Oo _{C_T}) =-2$ (\cite[Proposition 3.1]{ps}) and $h^1(\Ii _{C_T}(1)) = h^1(\Ii _C(5))$ (\cite[Theorem 1.1 (a)]{m1}, \cite{r}). 
Since  $\deg (C_T) =13$, $\chi (\Oo _{C_T}) =-2$, $C_T$ is not a plane curve (i.e. $h^0(M,\Ii _{C_T}(1)) =0$),
hence $h^0(\Oo _{C_T}(1)) =h^1(\Ii _{C_T}(1))+4=h^1(\Ii _C(5)) +4 \ge 15$. 
Since $\deg (C_T) >2p_a(C_T)-2$, we see that $C_T$ is not integral. 

Varying $T$ we find inside $W'$ a positive dimensional family of effective divisors $C_T$,
all of them linked to $C$ and with the same arithmetic genus, hence a flat family of effective divisors of $W'$. 
Therefore some of the effective divisors whose sum gives $C_T$ moves in $W'$. 

Let $D_1,\dots ,D_k$ be all all movable divisors of $C_T$
and let $R_1,\dots ,R_u$ the fixed divisors with multiplicities $e_1,\dots ,e_u$. Hence for a general $T$
we have $C_T=D_1+\cdots +d_k+e_1R_1+\cdots e_uR_u$ as effective Weil divisors of $W'$. 

Let $m(D_i)$, $1\le i\le k$, be the dimension of $D_i$ in the family $|\Ii _{C,W'}(5))|$.
We have $m(D_1)+\cdots +m(D_k) \ge 4$. We also proved that $m(D_1)+\cdots +m(D_k) \ge h^1(\Ii _C(5)) -7$. We saw that if $\deg (D_i) = 4$, then $W'$ contains a line $L$ and $m(D_i) =1$, 
because the moving family is induced by the family of planes of $M$ containing $L$. We saw that if $\deg (D_i) = 5$, then $D_i$ is a plane section of $W'$, hence $m(D_i)=3$. 

Let $R_1,\dots ,R_u$, $u\ge 0$, be the fixed divisors of $|\Ii _{W',C}(5)|$ and call $b_i$ the multiplicity of $R_i$ in $C_T$ (for a general $T$).

\quad (a) Assume the existence of $i\in \{1,\dots ,k\}$ such that $D_i$ is a plane curve of degree $5$. With no loss of generality we may assume $i=1$. Let $N$ be the plane containing $D_1$. Since $m(D_1) =3$, we have $k\ge 2$
and $h^0(W',\Ii _{C\cup D_1,W'}(5)) \ge 2$. Since $h^0(W',\Ii _{C\cup D_1,W'}(5)) = h^0(W',\Ii _{C,W'}(4))$.
Since $h^1(M,\Ii _{W',M}(4)) =h^1(\Oo _M(-1)) =0$,
we get $h^0(M,\Ii _C(4)) \ge 2$. Since $\alpha >3$ (Lemma \ref{wo6}), there are integral quartic surfaces $T_i\in |\Ii _{C,M}(4)|$, $i=1,2$,
with $T_1\ne T_2$. The complete intersection $T_1\cap T_2$ links $C$ to a locally Cohen-Macaulay curve $G$ such that $\deg (G) =4$.
and $p_a(G) =-16$ (\cite[Proposition 3.1]{ps}).
Since $p_a(G) \le -4$ and $\deg (G)=4$, $G$ has at least one multiple component, $J$, with multiplicity $e \ge 2$.  Our proof of the existence of $T_1$ and $T_2$
shows that we may take $T_1,T_2$ such that $D_2$ is a subcurve of $G$. Since $\deg (D_2)\ge 4 =\deg (G)$ and $G$ has a multiple component, we get a contradiction.

\quad (b) Assume the existence of $i\in \{1,\dots ,k\}$ such that $D_i$ is a plane curve of degree $4$. Just to fix the notation we assume $i=1$. Let $N$ be the plane spanned
by $D_1$. We have $W'\cap N = D_1\cup L$ with $L$ a line. Remark \ref{wo2} gives $h^0(\Ii _C(5)) \ge 7$, 
hence $m(D_1)+\cdots +m(D_k) \ge 6$. We saw that $m(D_1) =1$. Since $m(D_2)+\cdots +m(D_k) \ge 5$, we have $h^0(W',\Ii _{C\cup D_1,W'}(5)) \ge 6$. 
Since $\deg (L\cap D_1) =4$, we get $h^0(W',\Ii _{C\cup D_1\cup L}(5)) \ge 4$. Hence
we may find a movable divisor $E$ in $|\Ii _{C\cup L\cup D_1}(5)|$. We saw that $\deg (E)\ge 4$.
As in step (a) we get $h^0(M,\Ii _{C,M}(4)) \ge 4$, contradicting Lemma \ref{ww0}.

\quad ({c}) From now on we assume that each $D_i$ is non-degenerate. By Lemma \ref{non1} we may
assume $\deg (D_i) \ge 7$ for all $i$. By Remark \ref{ww01} we cannot have $2\le \deg (R_i) \le 4$ and we have $\deg (R_i)=1$
at most one index $i$. 

 Recall that $13 =\sum _{i=1}^{k} \deg (D_i) +\sum _{i=1}^{u} b_iR_u$ and we proved that $k+u>1$. Since $\deg (D_i)\ge 7$ for all $i$, we have $k=1$.
 
 Assume that $C_T$ has no multiple component. We have $h^0(\Oo _{D_1}(1)) +h^0(\Oo _{R_1}(1))+\cdots +h^0(\Oo _{R_u}(1)) \ge 2+h^1(\Ii _C(5))$.
Since $D_1$ moves, we have $p_a(D_1) >0$ (\cite{c2}), hence $h^0(\Oo _{D_1}(1)) \le \deg (D_1)$. Since $h^0(\Oo _{R_i}(1)) = \deg (R_i) +1$ for
at most one index $i$, we get a contradiction.

Hence  $C_T$ has at least one multiple component, say $R_1$. Since $\deg (D_1) \ge 6$, we get $b_1\deg (R_1) \le 13 -\deg (D_1) \le 7$ and
in particular $\deg (R_1)\le 3$. Since $W'$ has no curve of degree $2$ or $3$ (Remark \ref{ww01}), $R_1$ is a line, hence we may assume
$h^1(\Ii _C(5)) \ge 13$. Set $b:= b_1$, $R:= R_1$ and $e:= \deg (C\cap R)$. We have $\deg (D_1)=13-b$.
By Remarks \ref{wo5} and \ref{ww01} we may assume $e=0$, i.e. $R\cap C =\emptyset$ and that $R$ is contained in the smooth locus of $W'$.

\quad (d) Recall that $h^1(M,\Ii _{C,M}(5)) \ge 13$. By Lemma \ref{ww0} we may assume that $L$ has no line $T$ with $\deg (T\cap C)\ge 7$.
By Lemma \ref{nn1} we have $h^1(M,\Ii _{C,M}(4)) \ge 16$, i.e. $h^0(M,\Ii _{C,M}(4)) \ge 2$. Since $\alpha >3$ 
(Lemma \ref{wo6}), each $S\in |\Ii _{C,M}(4)|$ is irreducible.
 Let $\Bb$ denote the linear system on $W'$
induced by $|\Ii _{C,M}(4)|$ and fix a general $S\in |\Ii _{C,M}(4)|$. Write $S\cap W' = C+C' \in \Bb$. Since $C'$ is linked to $C$ by the complete intersection
$S\cap W'$, we have $\deg (C') =8$ and $p_a(C') =-10$ (\cite[Proposition 3.1]{ps}). Hence $C'$ has a multiple component. Since $W'$ contains no curve
of degree $x\in \{2,3\}$, the multiple component is a line. Since $W'$ has a unique line, $R$, $R$ is the multiple component. We saw that $R\cap C=\emptyset$
and $C\subset W_{\reg}$. Since $\dim (\Bb )>0$, $\Bb$ has at least one movable component, $A$. By Lemma \ref{non1} $A$ is a plane curve of degree $x\in \{4,5\}$.
We have $C\cup R \cup A \subset S$.
First assume $x=5$. Since $A\in |\Oo _S(1)|$, $C\cup R$ is contained in an element of $|\Oo _S(3)|$. Since the restriction map $H^0(M,\Oo _M(3)) \to H^0(S,\Oo _S(3))$
we get $\alpha \le 3$, contradicting Lemma \ref{wo6}.
Now assume $x=4$. Since $R$ is the only line of $W'$ we get $A\cup R\in |\Oo _S(1)|$. As above we get $\alpha \le 3$, a contradiction.

\providecommand{\bysame}{\leavevmode\hbox to3em{\hrulefill}\thinspace}

\end{document}